\newtheorem{thm}{Theorem}[section]
\newtheorem{prob}{Problem}
\newtheorem{lem}{Lemma}[section]
\newtheorem{cor}{Corollary}[section]
\newtheorem{remark}{Remark}
\theoremstyle{definition}
\begin{document}
\title{The multiplicity of a Hermitian eigenvalue on graphs\footnote{This work is supported
by National Natural Science Foundation of China (No. 11371372).}}
\author{{Qian-Qian Chen,\,\, Ji-Ming Guo,\,\, Zhiwen Wang}\setcounter{footnote}{-1}\footnote{\emph{Email addresses:} qianqian\_chen@yeah.net\,(Q.-Q. Chen), jimingguo@hotmail.com\,(J.-M. Guo), walkerwzw@163.com\,(Z. Wang).}\\[2mm]
\small School of Mathematics, \\
\small  East China University of Science and Technology, Shanghai, P. R. China.}

\date{}
\maketitle

{\flushleft\large\bf Abstract}

For a graph $G$, let $\mathcal{S}(G)$ be the set consisting of Hermitian matrices whose graph is $G$.
Denoted by $m_B(G,\lambda)$ the multiplicity of an eigenvalue $\lambda$ of $B(G)\in \mathcal{S}(G)$, we show that $m_B(G,\lambda)\le 2\theta(G)+p(G)$ where $\theta(G)$ and $p(G)$ are the cyclomatic number and the number of pendent vertices of $G$ respectively, and characterize the graphs attaining the equality.
This is a generalization of a result on adjacency matrix by Wang et al.\cite{Wang1}.
Moreover, they arose an open problem in \cite{Wang1}: \textit{characterize all graphs with $m_A(G,\lambda)=2\theta(G)+p(G)-1$ for any eigenvalue $\lambda$ of its adjacency matrix.}

In this paper, we completely characterize the graphs with $m_B(G,\lambda)=2\theta(G)+p(G)-1$ for any eigenvalue $\lambda$ of an arbitrary Hermitian matrix $B(G)\in \mathcal{S}(G)$.
This result provides a stronger answer to the above problem, and encompasses some previous known works considering $\lambda=-1$ or $0$ on the problem.
\vspace{0.1cm}
\begin{flushleft}

\textbf{Keywords:} Graph; Hermitian matrix; Eigenvalue; Multiplicity.
\end{flushleft}
\textbf{AMS Classification:} 05C35, 05C50

\section{Introduction}\label{s-1}
Let $G=(V(G), E(G))$ be a finite undirected graph without loops and multiple edges, and with vertex set $V(G)$ and edge set $E(G)$, respectively. The adjacency matrix $A(G)$ of $G$ is defined as a hermitian matrix $[a_{uv}]$. For $u\sim v$, we have $a_{uv}=1$, otherwise,  $a_{uv}=0$. The nullity, denoted by $\eta(G)$, is the number of eigenvalues of $A(G)$ that are equal to $0$. The multiplicity problem of eigenvalues of graphs is a classical topic in the spectrum theory of graphs. This problem is related to the development of algebraic graph theory, which explores the relationship between graph properties and algebraic structures associated with the graph. For example, the complete graph $K_n$ has $-1$ as an eigenvalue with maximum multiplicity among all connected graphs. Additionally,  the nullity of a graph has important applications in chemistry. According to H\"{u}ckel molecular orbital theory, if the nullity of a molecular graph $G$ is positive, then the corresponding chemical compound in highly reactive , unstable, or nonexistent(see \cite{Atkins} \cite{CV}).

Regarding the adjacency matrix, much attention has been paid on exploring  the connections between various structural parameters (such as matching number, fractional matching number, chromatic number and so on) and the multiplicity of eigenvalues within graphs (see \cite{Belardo, Chang, Chang1, Chen, Gong, Guo, Ma, Wang1, Wang3, Wang4, Wang5, Zhou}  and references therein).
In 2016, a significant work by Ma et al. \cite{Ma} introduced the nullity of a general graph of cyclomatic number $\theta(G)$ in terms of the number $p(G)$ of pendant vertices, where $\theta(G)=|E(G)|-|V(G)|+\omega(G)$ and $\omega(G)$ is the number of connected components of $G$.
Recently, Wang et al.\cite{Wang1} extended this result from the nullity to the multiplicity of arbitrary eigenvalue and gave the following result:
\begin{thm}\label{the-bound-of-A}
Let $G$ be a connected graph with at least two vertices. Then $m(G, \lambda) \leq$ $2 \theta(G)+p(G)$ for any $\lambda \in \mathbb{R}$, the equality holds if and only if $G$ is a cycle $C_n$ and $\lambda=2 \cos \frac{2 k \pi}{n}$ with $k=1,2, \ldots,\left\lceil\frac{n}{2}\right\rceil-1$.
\end{thm}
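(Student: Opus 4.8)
The plan is to reformulate the multiplicity as a corank and to argue by induction on the number of vertices. Writing $M=A(G)-\lambda I$, we have $m(G,\lambda)=\dim\ker M=|V(G)|-\operatorname{rank}M$, where $M$ is a Hermitian matrix with the same off-diagonal support as $A(G)$. I would build the whole argument on three standard tools: Cauchy interlacing, which gives $|m(G,\lambda)-m(G-v,\lambda)|\le 1$ for every vertex $v$; additivity of $m$, $\theta$ and $p$ over connected components, so that the bound may be established for each component separately; and a pendant-vertex reduction. Since the target bound $2\theta(G)+p(G)$ is tight only for cycles, the inequality should have considerable slack on all other graphs, and the work lies in organising the reductions so that the slack is never overspent.

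For a pendant vertex $v$ with neighbour $u$ I would first prove the reduction $m(G,\lambda)\le m(G-u-v,\lambda)+1$. For $\lambda\ne 0$ this follows from interlacing at $u$ together with the observation that in $G-u$ the vertex $v$ is isolated and contributes to $\ker M$ only when $\lambda=0$; for $\lambda=0$ it follows from the classical nullity identity $\eta(G)=\eta(G-u-v)$. With this in hand, if $G$ has a pendant vertex I apply the reduction and verify the bookkeeping inequality $2\theta(G-u-v)+p(G-u-v)+1\le 2\theta(G)+p(G)$, using the exact identity $\theta(G)-\theta(G-u-v)=\deg(u)-1-\omega(G-u-v)$ and a bound on the number of new pendant vertices created at the former neighbours of $u$.

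If $\delta(G)\ge 2$ then $G$ contains a cycle. When $G$ is exactly a cycle $C_n$ I compute the spectrum directly: $A(C_n)$ is a circulant with eigenvalues $2\cos(2\pi k/n)$, $k=0,\dots,n-1$, which pair under $k\leftrightarrow n-k$; hence every eigenvalue has multiplicity at most $2=2\theta(C_n)+p(C_n)$, with multiplicity exactly $2$ precisely for $\lambda=2\cos(2k\pi/n)$, $k=1,\dots,\lceil n/2\rceil-1$, the simple values $\pm 2$ being excluded. This single computation yields both the bound and the equality description in the extremal case. If $\delta(G)\ge 2$ but $G$ is not a cycle, then $G$ has a branch vertex of degree $\ge 3$ and $\theta(G)\ge 2$; here I would decrease $\theta$ by one by deleting either an edge on a cycle both of whose endpoints have degree $\ge 3$, or a degree-$2$ vertex adjacent to a branch vertex, the choice being made so that the deletion creates no new pendant vertex on the branch side and strictly lowers $\theta$. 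The resulting change in $m$ (at most $2$ for an edge deletion, at most $1$ for a vertex deletion) is then absorbed by the drop of $2\theta(G)+p(G)$.

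For the equality statement I would trace strictness through the induction: on $C_n$ equality holds exactly at the paired eigenvalues above, while $m(G,\lambda)=2\theta(G)+p(G)$ forces every inequality in the reduction to be tight, which I expect to be impossible unless $p(G)=0$ and $G$ is connected and $2$-regular, i.e.\ a cycle (for trees the sharper bound $m(T,\lambda)\le p(T)-1$ already rules out equality). The main obstacle is precisely the case $\delta(G)\ge 2$ with $G$ not a cycle: a careless edge or vertex deletion can manufacture extra pendant vertices or split off small components, such as $K_2$, whose non-tight inductive bounds overshoot $2\theta+p$ when summed. Choosing the deletion so that the potential $2\theta+p$ provably drops by at least the increase in multiplicity, and extracting from this the strictness needed for the equality characterisation, is the technical heart of the proof.
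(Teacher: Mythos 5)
Your overall architecture --- interlacing reductions, a direct circulant computation on $C_n$, and induction --- is in the same spirit as the paper's, but the step on which you build the whole induction, namely the pendant reduction $G\mapsto G-u-v$ together with the bookkeeping inequality $2\theta(G-u-v)+p(G-u-v)+1\le 2\theta(G)+p(G)$, fails, and this is a genuine gap. Take for $G$ the tree consisting of a center $u$ with a pendant neighbour $v$ and three legs of length $2$ attached at $u$. Then $\theta(G)=0$ and $p(G)=4$, while $G-u-v$ is a disjoint union of three copies of $P_2$, so $\theta(G-u-v)=0$ and $p(G-u-v)=6$: your inequality reads $7\le 4$. The structural reason is that deleting the neighbour $u$ turns every degree-$2$ neighbour of $u$ into a new pendant vertex, so your reduction can \emph{increase} the potential $2\theta+p$; no ``bound on the number of new pendant vertices created'' can repair this, since nothing in the potential compensates for them. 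A second instance of the same defect: for $\lambda=0$ the graph $G-u-v$ may contain isolated vertices (take $G$ a star), and $m(K_1,0)=1>0=2\theta(K_1)+p(K_1)$, so the componentwise induction you appeal to cannot even be applied to those components. Choosing $v$ at the end of a pendant path of length at least $2$ fixes the first example but not the star or double-star type cases, where every pendant vertex is attached to a vertex of degree at least $3$, and you give no selection rule that works uniformly.

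This is precisely where the paper uses a stronger tool that your toolkit lacks: da Fonseca's path-deletion lemma (Lemma~\ref{lem-removePath}), which says that deleting an \emph{entire} pendant path $P$ at once lowers the multiplicity by at most $1$. Removing a whole pendant path decreases $p$ by exactly one and leaves $\theta$ unchanged, so the potential behaves correctly under that reduction, and it yields first the sharper tree bound $m_B(T,\lambda)\le p(T)-1$ (Theorem~\ref{thm-2}). Note that you invoke this sharper tree bound in your equality discussion but never prove it; it is in fact indispensable, both to rule out equality for trees and to settle the $\theta$-graph case. Two further problems: you announce induction ``on the number of vertices'', yet one of your reductions deletes an edge, which leaves the vertex count unchanged, so the inductive hypothesis does not apply to $G-e$ (the paper inducts on $\theta(G)$ instead); and your edge-deletion step has zero slack ($m$ rises by at most $2$ while the potential drops by exactly $2$), so strictness for non-cycles cannot be ``traced through'' the induction unless the induction hypothesis is itself the strict bound $2\theta(G)+p(G)-1$ for every connected graph other than a cycle --- which is exactly how the paper organizes the proof of Theorem~\ref{thm3}.
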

In sequel, Xu et al. \cite{Xu} and Li et al. \cite{Li} further expanded Theorem \ref{the-bound-of-A} by encompassing the $A_\alpha(G)$-matrix and $A_\alpha(\mathbf{X}_G)$-matrix, respectively. Here, $\mathbf{X}_G$ refers to a mixed graph or a complex unit gain graph that has an underlying graph of $G$.
Based on Theorem \ref{the-bound-of-A}, it can be observed that if $G\ncong C_n$, then $m(G, \lambda) \leq 2 \theta(G)+p(G)-1$ for all connected graph with at least two vertices.
However, the graphs with $m(G, \lambda)=2 \theta(G)+p(G)-1$ have not yet been comprehensively characterized.
Indeed, characterizing all extremal graphs with the refined upper bound is much difficult.
Wang et al. \cite{Wang1} originally put forward the open problem:
\begin{prob}\label{prob-1}
Characterize the connected graphs with the eigenvalue $\lambda$ such that $m(G, \lambda)=2\theta(G)+p(G)-1$.
\end{prob}
It is also said that this problem is worth investigating even though $G$ is assumed to be a tree or a unicyclic graph.
The problem has been partially solved by several scholars.
Wang et al.\cite{Wang3} presented a characterization of trees with $m(T, -1)=p(T)-1$.
Subsequently, Zhou et al.\cite{Zhou} completely resolved this open problem for general graphs when $\lambda=-1$.
For the case $p(G)=0$, Chang et al. \cite{Chang1} characterized all leaf-free graphs with nullity $2\theta(G)-1$.
Chang et al. \cite{Chang} and Wang et al.\cite{Wang4} provided a complete characterization for all connected graphs with $\eta(G)=2\theta(G)+p(G)-1$, independently.

Let $G$ be a simple undirected graph with vertex set $V(G)=\{1, 2, \ldots, n\}$, and
let $B=(b_{ij})$ be a Hermitian matrix.
The (weighted) graph of $B$, denoted by $G(B)$, is defined as follows: the diagonal of $B=(B_{ij})$ is not restricted by $G$, $b_{ij}\neq 0$ if and only if the vertices $i$ and $j$ are adjacent.
Suppose that $\mathcal{S}(G)$ is the set of all  Hermitian matrices which share a common graph $G$ (see \cite{C.M., C.M.1}).
For a given $B\in \mathcal{S}(G)$, we denote by $\sigma(B(G))$ (or $\sigma(G)$ without conflict) the set consisting of eigenvalues of $B(G)$.
The multiplicity of $\lambda$ as an eigenvalue of $B$ is written as $m_B(G, \lambda)$. If $B$ is a $(0,1)$-matrix with zero diagonal entries, then $B$ is the adjacency matrix of $G$.

The spectra of adjacency matrix and (signless) Lalpacian matrix of graphs have been extensively studied. However, there are few studies for more general matrices. In the past few years,
motivated by the works of Genin  et al. \cite{Genin} and Parter \cite{Parter}, some researchers have begun to study the eigenvalues multiplicities of real symmetric matrices whose corresponding graph is a tree, see \cite{Johnson1, Johnson2, Johnson3}.

Motivated by the works mentioned above,   this paper establishes the validity of Theorem \ref{the-bound-of-A} for any Hermitian matrix $B\in \mathcal{S}(G)$, as follows:
\begin{thm}\label{thm3}
Let $G$ be a connected graph. For any eigenvalue $\lambda$ of  $B\in \mathcal{S}(G)$, $m_B(G, \lambda)\leq 2\theta(G)+p(G)$, and the equality holds if and only if $G\cong C_n$ and $m_B(G,\lambda)=2$.
\end{thm}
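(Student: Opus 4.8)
The plan is to reduce everything to a nullity computation and then induct on $|V(G)|+|E(G)|$. Since the diagonal of a matrix in $\mathcal{S}(G)$ is unconstrained, $B-\lambda I\in\mathcal{S}(G)$ as well, so $m_B(G,\lambda)=\dim\ker(B-\lambda I)$; replacing $B$ by $B-\lambda I$ I may assume $\lambda=0$ and work with $\eta(B):=\dim\ker B$ throughout. I will use three standard linear-algebraic facts: Cauchy interlacing (deleting a vertex changes the multiplicity by at most $1$); the Weyl inequality applied to the rank-$2$ Hermitian perturbation $B(G)-B(G-e)$ (deleting an edge changes the multiplicity by at most $2$, and deleting a cycle edge lowers $\theta$ by exactly $1$); and the Schur complement (if a vertex $w$ satisfies $d_w\neq\lambda$, i.e.\ has a nonzero pivot, it can be eliminated, preserving the multiplicity exactly while passing to a matrix in $\mathcal{S}$ of a graph on $V\setminus\{w\}$).

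First I would dispose of pendant vertices so as to reach the case $p(G)=0$. Let $v$ be a pendant with neighbour $u$. If $\deg_G u\ge 3$, interlacing gives $m_B(G,\lambda)\le m_B(G-v,\lambda)+1$, and since $\theta(G-v)=\theta(G)$ and $p(G-v)=p(G)-1$, the inductive bound for $G-v$ yields $m_B(G,\lambda)\le 2\theta(G)+p(G)$. If $\deg_G u=2$ the interlacing step is too lossy, so I would use exact reductions instead: when $d_v\neq\lambda$ the Schur complement at $v$ gives $m_B(G,\lambda)=m_{\tilde B}(G-v,\lambda)$ with $\tilde B\in\mathcal{S}(G-v)$, and when $d_v=\lambda$ the pendant equation forces every kernel vector to vanish at $u$, and one checks $m_B(G,\lambda)=m_B(G-u-v,\lambda)$. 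In both subcases $\theta$ is preserved and $p$ does not increase, closing the induction; the case $\deg_G u=1$ is the base case $G=K_2$.

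It then remains to treat $p(G)=0$, i.e.\ minimum degree at least $2$, where the target is $m_B(G,\lambda)\le 2\theta(G)$ with equality only for cycles. If $G=C_n$, the eigenvalue equation is a three-term recurrence around the cycle, so the kernel is determined by two consecutive coordinates and $m_B(C_n,\lambda)\le 2$; equality is realised, e.g.\ by the adjacency matrix. If $G\neq C_n$ then $\theta(G)\ge 2$ and $G$ has a vertex of degree $\ge 3$. When $G$ has minimum degree $\ge 3$, every edge has both endpoints of degree $\ge 3$; deleting one such cycle edge loses $2$ while $\theta$ drops by $1$ and no new pendant appears, giving $m_B(G,\lambda)\le 2\theta(G)$. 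When instead $G$ has a degree-$2$ vertex $w$ with $d_w\neq\lambda$, Schur-eliminating $w$ reroutes its two edges into a single edge between its neighbours, preserving the multiplicity while lowering $\theta$ by $1$ and creating no pendant, which in fact yields the strict bound $m_B(G,\lambda)\le 2\theta(G)-2$.

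The hard part will be the residual resonant configuration: $G$ is a proper subdivision in which every degree-$2$ vertex carries diagonal entry exactly $\lambda$, so no Schur pivot is available along the degree-$2$ chains. Here I would analyse each maximal chain of degree-$2$ vertices through its three-term transfer recurrence --- which, with $d_i=\lambda$, decouples into even- and odd-indexed coordinates --- so as to express $\ker B$ in terms of the values at the branch vertices, effectively collapsing $G$ onto its minimum-degree-$\ge 3$ topological multigraph $H$ with $\theta(H)=\theta(G)$, on which a clean edge-deletion is available. Propagating the tightness of every inequality backwards through all these reductions is the crux of the equality analysis: a pendant always reduces either to a non-cycle (hence strict by induction) or to a cycle whose bound is already exceeded by the pendant's contribution, and in the $p(G)=0$ case any vertex of degree $\ge 3$ renders one of the above reductions strict. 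This would leave $G\cong C_n$ with $m_B(G,\lambda)=2$ as the only equality case.
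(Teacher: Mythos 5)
Your skeleton --- shift $\lambda$ to $0$, kill pendants by exact Schur/pendant reductions, then analyse the leafless case --- is a genuinely different route from the paper's (which proves the tree bound of Theorem \ref{thm-2} via da Fonseca's pendant-path lemma and then inducts on $\theta(G)$ using only the deletion inequalities of Lemmas \ref{lem-2-11} and \ref{lem-2-12}), and the pendant half of your plan is sound. The leafless half, however, contains a concrete error and a genuine gap. The error: suppressing a degree-$2$ vertex $w$ with neighbours $a,b$ does \emph{not} lower $\theta$ when $a\nsim b$; the reduced graph $G'=(G-w)+ab$ has $|V|-1$ vertices and $|E|-1$ edges, so $\theta(G')=\theta(G)$, and this step yields only $m_B(G,\lambda)\le 2\theta(G)$ (strictness must then come from the induction hypothesis, since $G'$ is a cycle iff $G$ is), not the claimed $2\theta(G)-2$. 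Moreover, when $a\sim b$ the Schur complement adds $-b_{aw}b_{wb}/(d_w-\lambda)$ to the existing entry $b_{ab}$, which can cancel it; the reduced graph is then $G-w-ab$, where $\theta$ drops by $2$ but pendant or even isolated vertices can appear (take $\deg_G a=2$), so ``creating no pendant'' is false, and isolated vertices do not even satisfy $m\le 2\theta+p$, so the inductive statement would have to be reformulated to accommodate them.

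The genuine gap is the ``residual resonant configuration,'' which is exactly the crux and is only asserted, not proved. Collapsing $G$ onto its topological multigraph $H$ is not available inside this framework: $\mathcal{S}(\cdot)$ and the statement being proved are defined for simple graphs only, so the induction hypothesis cannot be invoked on $H$; you never specify what matrix on $H$ the kernel of $B$ is transported to, and the even/odd decoupling along a chain with all diagonal entries equal to $\lambda$ behaves differently according to the parity of the chain length, so ``a clean edge-deletion is available'' is an assertion rather than an argument. Since the equality characterization is threaded through these same reductions, the whole $p(G)=0$, $G\ncong C_n$ case is left open. Note that the difficulty is self-inflicted: vertex deletion (Lemma \ref{lem-2-11}) is insensitive to diagonal entries, so in the leafless case you can delete a degree-$2$ vertex adjacent to a major vertex ($\theta$ drops by $1$, $p$ rises by at most $1$), or delete a cycle edge via Lemma \ref{lem-2-12} when all degrees are at least $3$, treating $\theta$-graphs separately via the tree bound as the paper does; this closes the induction without ever meeting a resonant chain. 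Finally, your equality analysis in the pendant step (when $G-v$ is a cycle, i.e.\ $G$ is a tadpole) is only gestured at; it is rescued precisely by the exact reductions you already have ($d_v\ne\lambda$ gives $m_B(G,\lambda)\le 2$ by Schur at $v$; $d_v=\lambda$ gives $m_B(G,\lambda)=m_B(P_{m-1},\lambda)\le 1$), but the phrase ``bound already exceeded by the pendant's contribution'' is not itself an argument.
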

Naturally, Problem \ref{prob-1} can be extended to the Hermitian matrices:
\begin{prob}\label{prob-2}
Characterize the connected graphs with the eigenvalue $\lambda$ such that $m_B(G, \lambda)=2\theta(G)+p(G)-1$ for any eigenvalue $\lambda$ of  $B\in \mathcal{S}(G)$.
\end{prob}
In Section $4$, we completely solve Problem \ref{prob-2} and hence Problem $1$, as shown in Theorem \ref{thm-conclusion-result}. We also deduce previous known results from our solutions in those outstanding works.

The paper is organized as follows: Section \ref{s-2} presents some critical Lemmas that we need for investigations.
In Section \ref{s-3}, we first establish an upper bound for any Hermitian eigenvalue of a tree and characterize the trees attain the upper bound. Then we  prove that $m_B(G,\lambda)\le2\theta(G)+p(G)$ for any graph $G$ with $\lambda$ as its Hermitian eigenvalue, and the extremal graph must be a cycle. In Section \ref{s-4},  as a stronger answer to Problem \ref{prob-1}, which is Problem \ref{prob-2}, we characterize all graphs with $m_B(G,\lambda)=2\theta(G)+p(G)-1$ (see Theorem \ref{thm-conclusion-result}).

\section{Preliminaries}\label{s-2}
Given a graph $G$, a matrix whose graph is $G$ is denoted by $B(G)$, where $B\in \mathcal{S}(G)$.
Let $H$ be an induced subgraph of $G$ with vertex set $S$.
We denote the principal submatrix of $B$ resulting from retention of the rows and columns $S$ by $B(H)$. The following result follows immediately from the Interlacing theory of graphs (see \cite{Cv2}).

\begin{lem}\label{lem-2-11}
Let $G$ be a graph with a vertex $v$. Then $$m_B(G-v, \lambda)-1\leq m_B(G, \lambda)\leq m_B(G-v, \lambda)+1$$ for $\lambda\in \sigma(B(G))$ of any matrix $B\in \mathcal{S}(G)$.
\end{lem}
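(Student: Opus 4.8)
The plan is to recognize this statement as a direct consequence of the Cauchy interlacing theorem for Hermitian matrices. The key structural observation is that, by the very definition of $\mathcal{S}(G)$ given above, deleting the vertex $v$ from $G$ corresponds exactly to deleting the single row and column indexed by $v$ from the Hermitian matrix $B=B(G)$; that is, $B(G-v)$ is the $(n-1)\times(n-1)$ principal submatrix of $B(G)$, and it is again Hermitian. First I would fix the notation $\lambda_1\ge\lambda_2\ge\cdots\ge\lambda_n$ for the (real) eigenvalues of $B(G)$ and $\mu_1\ge\mu_2\ge\cdots\ge\mu_{n-1}$ for those of $B(G-v)$. Cauchy interlacing then yields the chain $\lambda_{i+1}\le\mu_i\le\lambda_i$ for every $i\in\{1,\dots,n-1\}$, which is the only analytic input the proof needs.

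Next I would extract the two multiplicity inequalities from this chain by pure index bookkeeping. For the upper bound, suppose $\lambda$ has multiplicity $m=m_B(G,\lambda)$, so that it occupies $m$ consecutive positions $\lambda_s=\lambda_{s+1}=\cdots=\lambda_{s+m-1}=\lambda$. For each of the $m-1$ indices $i=s,\dots,s+m-2$ the sandwich $\lambda_{i+1}\le\mu_i\le\lambda_i$ collapses to $\mu_i=\lambda$, so $\lambda$ is an eigenvalue of $B(G-v)$ of multiplicity at least $m-1$; this is exactly $m_B(G,\lambda)\le m_B(G-v,\lambda)+1$. For the lower bound I would run the symmetric argument starting from a block $\mu_t=\cdots=\mu_{t+m'-1}=\lambda$ of length $m'=m_B(G-v,\lambda)$ in $B(G-v)$: combining $\lambda_{i+1}\le\mu_i$ with $\mu_{i+1}\le\lambda_{i+1}$ over $i=t,\dots,t+m'-2$ forces $\lambda_{t+1}=\cdots=\lambda_{t+m'-1}=\lambda$, giving $m_B(G-v,\lambda)-1\le m_B(G,\lambda)$. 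Together these prove the claim.

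I do not expect a genuine obstacle here, since the statement is essentially a textbook corollary of interlacing; the only points requiring care are the off-by-one index accounting when reading multiplicities off the interlacing chain, and the observation that although the lemma is phrased for $\lambda\in\sigma(B(G))$, the displayed inequalities remain valid for every real $\lambda$ under the convention that the multiplicity of a non-eigenvalue is $0$, which is the form in which the lemma will actually be applied in the sequel.
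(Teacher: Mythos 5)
Your proposal is correct and follows exactly the route the paper takes: the paper dispenses with this lemma by remarking that it ``follows immediately from the Interlacing theory of graphs,'' and your argument is precisely that deduction carried out in full -- identifying $B(G-v)$ with the principal submatrix of $B(G)$ obtained by deleting the row and column indexed by $v$, invoking Cauchy interlacing $\lambda_{i+1}\le\mu_i\le\lambda_i$, and reading off both multiplicity bounds by correct index bookkeeping. No gaps; you have simply supplied the details the paper leaves to the reader.
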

\begin{lem} \label{lem-2-12}
Let $G$ be a graph with an edge $e$. Then $$m_B(G, \lambda)\leq m_B(G-e, \lambda)+2$$ for $\lambda\in \sigma (B(G))$ of any matrix $B\in \mathcal{S}(G)$.
\end{lem}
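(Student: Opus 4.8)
The plan is to realize the passage from $G-e$ to $G$ as a Hermitian perturbation of small rank and then apply interlacing. Write $e=\{u,v\}$ and fix $B=B(G)\in\mathcal{S}(G)$. The matrix $B(G-e)$ is obtained from $B$ by keeping the diagonal and every off-diagonal entry unchanged except that the two conjugate entries $b_{uv}$ and $b_{vu}=\overline{b_{uv}}$ are set to zero. Hence $E:=B(G)-B(G-e)$ is a Hermitian matrix whose only nonzero entries sit in positions $(u,v)$ and $(v,u)$; restricted to the rows and columns indexed by $\{u,v\}$ it is $\bigl(\begin{smallmatrix}0 & b_{uv}\\ \overline{b_{uv}} & 0\end{smallmatrix}\bigr)$, which has eigenvalues $\pm|b_{uv}|\neq 0$. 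Therefore $E$ is Hermitian of rank exactly $2$.

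First I would diagonalize this rank-$2$ perturbation. Writing $E=|b_{uv}|\,w_1w_1^{*}-|b_{uv}|\,w_2w_2^{*}$, where $w_1,w_2$ are the two orthonormal eigenvectors of $E$ (both supported on the coordinates $u,v$), we obtain the chain
$$B(G-e)\;\longrightarrow\;B_1:=B(G-e)+|b_{uv}|\,w_1w_1^{*}\;\longrightarrow\;B_1-|b_{uv}|\,w_2w_2^{*}=B(G).$$
Each arrow is a rank-one Hermitian update, and Cauchy interlacing for a rank-one perturbation $M\mapsto M+\rho\,ww^{*}$ forces the eigenvalues of the two matrices to interlace, so that for every real $\lambda$ the multiplicity can change by at most one: $m(M+\rho ww^{*},\lambda)\le m(M,\lambda)+1$. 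Applying this twice along the chain yields $m_B(G,\lambda)\le m_B(G-e,\lambda)+2$, which is the claim.

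A second, even shorter route avoids the perturbation bookkeeping entirely and only invokes the already-established Lemma \ref{lem-2-11}. Deleting the vertex $u$ from $G$ also deletes the edge $e$, so $(G-e)-u=G-u$ and the principal submatrix of $B$ on $V(G)\setminus\{u\}$ coincides with that of $B(G-e)$. Lemma \ref{lem-2-11} applied to $G$ gives $m_B(G,\lambda)\le m_B(G-u,\lambda)+1$, while the same lemma applied to $G-e$ gives $m_B(G-u,\lambda)=m_B((G-e)-u,\lambda)\le m_B(G-e,\lambda)+1$. Chaining these two inequalities again produces $m_B(G,\lambda)\le m_B(G-e,\lambda)+2$.

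The steps are all standard, so there is no serious obstacle; the only points that require care are mild. In the perturbation argument one must check that the diagonal of $B(G-e)$ is chosen to agree with that of $B$ (so that the difference is genuinely off-diagonal and of rank $2$, not larger), and that interlacing indeed controls the multiplicity of a fixed value $\lambda$ rather than only the ordered eigenvalues. In the Lemma \ref{lem-2-11} route the only thing to verify is that $B(G-u)$ is the same principal submatrix whether computed inside $B(G)$ or inside $B(G-e)$, which holds because every entry in which $B(G)$ and $B(G-e)$ differ lies in row or column $u$ and is therefore discarded.
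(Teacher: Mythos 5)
Both of your arguments are correct. Your first route is in essence the paper's own proof: the paper likewise identifies $E:=B(G)-B(G-e)$ as the Hermitian matrix supported on the two entries $(u,v)$ and $(v,u)$, but it finishes in one line with rank subadditivity, $\mathrm{rank}(\lambda I-B(G-e))=\mathrm{rank}\bigl((\lambda I-B(G))+E\bigr)\le \mathrm{rank}(\lambda I-B(G))+\mathrm{rank}(E)=\mathrm{rank}(\lambda I-B(G))+2$, which converts directly into the multiplicity bound; your splitting of $E$ into two rank-one Hermitian updates followed by interlacing proves the same inequality with a bit more machinery (and needs the extra remark that the multiplicity of a fixed $\lambda$ moves by at most one under a rank-one update of either sign, which you correctly handle). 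Your second route is genuinely different from the paper: it deduces the edge lemma formally from the vertex lemma (Lemma \ref{lem-2-11}) via the identity $(G-e)-u=G-u$, together with the observation that the principal submatrix on $V(G)\setminus\{u\}$ is the same whether computed in $B(G)$ or in $B(G-e)$, since all entries where the two matrices differ lie in row or column $u$. That derivation is more economical---it reuses an already established interlacing result, avoids any perturbation bookkeeping, and makes transparent why the constant is $2$ (one application of Lemma \ref{lem-2-11} in each direction)---whereas the paper's rank argument is self-contained at the matrix level and extends verbatim to the simultaneous deletion of $k$ edges, yielding the bound $m_B(G,\lambda)\le m_B(G-e_1-\cdots-e_k,\lambda)+2k$ without iteration.
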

\begin{proof}
Assuming that $u$ and $v$ are the endpoints of $e$,  we can express the Hermitian matrix of $G$ as:
\begin{equation*}
 B(G)=\left(\begin{array}{ccc}
 0 & b & \alpha^*\\
 \bar{b}&0&\beta^*\\
\alpha &\beta&B(G-u-v)\\
 \end{array}\right),
 \end{equation*}
 where the first and the second rows are indexed by $u$ and $v$, respectively; $\alpha^*$ and $\beta^*$ are conjugate transposes of $\alpha$ and $\beta$, respectively. Therefore,
 \begin{equation*}
 \lambda I-B(G-e)=\lambda I-B(G)+\left(\begin{array}{ccc}
 0 & b & 0\\
\bar{b} &0&0\\
0&0&0\\
 \end{array}\right),
 \end{equation*}
 which implies that
 \begin{equation*}
 rank(\lambda I-B(G-e))\leq rank(\lambda I-B(G))+2,
 \end{equation*}
 and hence,  $m_B(G, \lambda)\leq m_B(G-e, \lambda)+2$.
\end{proof}
\begin{lem}\label{lem-2-1}
Let $GuvH$ be the graph obtained from $G \cup H$ by adding an edge joining the vertex $u$ of $G$ to the vertex $v$ of $H$. For any eigenvalue $\lambda$ of any matrix $B\in \mathcal{S}(GuvH)$, if  $\lambda\in \sigma (B(G))$  and  $\lambda\notin \sigma(B(G-u))$, then $$m_B(GuvH, \lambda)=m_B(H-v,\lambda).$$
\end{lem}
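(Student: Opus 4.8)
The plan is to analyze the eigenvalue equation of $M := B(GuvH)$ directly, exploiting the block structure induced by the single connecting edge $uv$. Ordering the vertices as $V(G)$ followed by $V(H)$, write $M = \begin{pmatrix} B(G) & E \\ E^* & B(H)\end{pmatrix}$, where $E$ has a single nonzero entry $b = b_{uv}$ in position $(u,v)$, and $b \neq 0$ since $uv \in E(GuvH)$. A vector $x = (y,z)$ with $y \in \mathbb{C}^{V(G)}$, $z \in \mathbb{C}^{V(H)}$ lies in $\ker(\lambda I - M)$ if and only if
\begin{equation*}
(\lambda I - B(G))\,y = b\,z_v\,e_u \qquad\text{and}\qquad (\lambda I - B(H))\,z = \bar b\, y_u\, e_v ,
\end{equation*}
where $e_u, e_v$ are the coordinate vectors at $u$ and $v$. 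I would establish the claimed equality by exhibiting a linear isomorphism between this kernel and $\ker(\lambda I - B(H-v))$.

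First I would extract the scalar consequence of the hypotheses. Splitting off the $u$-coordinate, write $B(G) = \begin{pmatrix} B(G-u) & c \\ c^* & d\end{pmatrix}$ with $d=b_{uu}$. Because $\lambda \notin \sigma(B(G-u))$, the matrix $\lambda I - B(G-u)$ is invertible, and the Schur-complement identity gives $\det(\lambda I - B(G)) = \det(\lambda I - B(G-u))\cdot\phi$ with $\phi = (\lambda - d) - c^*(\lambda I - B(G-u))^{-1}c$. Since $\lambda \in \sigma(B(G))$ the left side vanishes while $\det(\lambda I - B(G-u)) \neq 0$, forcing the \emph{key relation} $\phi = 0$. (Equivalently, by Lemma \ref{lem-2-11} these hypotheses yield $m_B(G,\lambda)=1$, and the unique $\lambda$-eigenvector of $B(G)$ has nonzero $u$-coordinate.)

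Next I would read off the structure of an arbitrary $x = (y,z) \in \ker(\lambda I - M)$. From the first block equation, the $(G-u)$-rows give $y' = y_u (\lambda I - B(G-u))^{-1} c$, so the restriction $y'$ of $y$ to $V(G)\setminus\{u\}$ is determined by $y_u$, while the $u$-row becomes $\phi\, y_u = b\, z_v$. The key relation $\phi = 0$ together with $b \neq 0$ then forces $z_v = 0$; this is the crux of the argument. Once $z_v = 0$, the second block equation decouples: its $t \neq v$ rows reduce to $(\lambda I - B(H-v))\,z'' = 0$, where $z''$ is the restriction of $z$ to $V(H)\setminus\{v\}$, so $z'' \in \ker(\lambda I - B(H-v))$; and its $v$-row determines $y_u$ (hence all of $y$) as an explicit linear functional of $z''$.

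Finally I would assemble the isomorphism. The preceding analysis shows that every $x \in \ker(\lambda I - M)$ is completely and linearly determined by $z'' \in \ker(\lambda I - B(H-v))$, and conversely any such $z''$ produces a genuine element of $\ker(\lambda I - M)$; here one need only recheck the $u$-row equation, which reads $0 = \phi\,y_u = b\,z_v = 0$ and so holds automatically. The map $x \mapsto z''$ is injective because $z'' = 0$ forces $y_u = 0$, then $y' = 0$ and $z_v = 0$, i.e. $x = 0$. Hence $\dim \ker(\lambda I - M) = \dim \ker(\lambda I - B(H-v))$, which is exactly $m_B(GuvH,\lambda) = m_B(H-v,\lambda)$. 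The main obstacle is the step $z_v = 0$: everything hinges on recognizing that the Schur complement $\phi$ at $u$ is zero, which is precisely where both hypotheses $\lambda \in \sigma(B(G))$ and $\lambda \notin \sigma(B(G-u))$ enter.
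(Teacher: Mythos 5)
Your proposal is correct, and it reaches the conclusion by a genuinely different route than the paper. You analyze the eigen-equation of $B(GuvH)$ directly and build an explicit linear isomorphism $\ker(\lambda I - B(GuvH)) \cong \ker(\lambda I - B(H-v))$: the hypotheses force the Schur complement $\phi = (\lambda - b_{uu}) - c^*(\lambda I - B(G-u))^{-1}c$ to vanish, which (since $b\neq 0$) forces $z_v=0$ for every eigenvector, after which the equation decouples and the restriction $z''$ to $V(H)\setminus\{v\}$ carries all the information. The paper instead works with ranks: it uses the same underlying fact in the form ``the $u$-row $(\alpha^*,\, b_{uu}-\lambda)$ of $B(GuvH)-\lambda I$ lies in the row space of $[B_1,\alpha]$, $B_1 = B(G-u)-\lambda I$,'' eliminates that row and column, then applies a congruence $Q^*B'Q$ to decouple the $u,v$ entries from $B(H-v)-\lambda I$, obtaining $\mathrm{rank}(B(GuvH)-\lambda I) = n(G)+1+\mathrm{rank}(B(H-v)-\lambda I)$ and hence the multiplicity identity. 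So your key relation $\phi=0$ and the paper's linear-dependence claim are the same fact in two guises; the difference is in the architecture built on top of it. Your kernel-isomorphism argument is longer (both directions of the correspondence must be checked, as you do) but yields extra structural information for free: every $\lambda$-eigenvector of $B(GuvH)$ vanishes at $v$ and restricts to a $\lambda$-eigenvector of $B(H-v)$ --- a fact the paper itself reconstructs by hand later (e.g.\ in the eigenvector arguments inside Lemma \ref{thm-4} and Lemma \ref{lem-4-1}). The paper's rank bookkeeping is more compact and requires no verification of surjectivity, but it is purely dimensional and discards the eigenvector description. Both proofs use the hypotheses at exactly the same two points: invertibility of $\lambda I - B(G-u)$, and $\lambda\in\sigma(B(G))$ together with $b\neq 0$.
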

\begin{proof}
The Hermitian matrix of $GuvH$ is defined as follows:
\begin{equation*}
 B(GuvH)=\left(\begin{array}{cccc}
 B(G-u) & \alpha & O & O\\
\alpha^* &b_{uu} & l&O \\
  O & \bar{l} & b_{vv}&\beta^*\\
  O&O&\beta&B(H-v)\\
 \end{array}\right),
 \end{equation*}
 where $l\neq 0$. Then
 \begin{equation*}
 B(GuvH)-\lambda I=\left(\begin{array}{cccc}
 B_1 & \alpha & O & O\\
\alpha^* &b_{uu}-\lambda & l&O \\
  O & \bar{l} & b_{vv}-\lambda&\beta^*\\
  O&O&\beta&B_2\\
 \end{array}\right),
 \end{equation*}
where $B_1=B(G-u)-\lambda I$ and $B_2=B(H-v)-\lambda I$. Since $\lambda\in \sigma(B(G))$  and  $\lambda\notin \sigma(B(G-u))$, it is easy to see $(\alpha^*, b_{uu}-\lambda)$ can be represented linearly by the row vectors of $[B_1 , \alpha]$. Therefore
\begin{equation*}
 B(GuvH)-\lambda I=\left(\begin{array}{cccc}
 B_1 & \alpha & O & O\\
\alpha^* &b_{uu}-\lambda & l&O \\
  O &  \bar{l}& b_{vv}-\lambda&\beta^*\\
  O&O&\beta&B_2\\
 \end{array}\right)\rightarrow
 \left(\begin{array}{cccc}
 B_1 & O & O & O\\
O&0 & l&O \\
  O & \bar{l} & b_{vv}-\lambda&\beta^*\\
  O&O&\beta&B_2\\
 \end{array}\right)=B'.
 \end{equation*}
 Let
 \begin{equation*}
Q=\left(\begin{array}{cccc}
I & O & O&O\\
O &1& l&-\frac{1}{\bar{l}}\beta^*\\
  O & 0 & 1&0\\
  O&O&O&I\\
 \end{array}\right),
 \end{equation*}
 we have
 \begin{equation*}
 Q^*B'Q=\left(\begin{array}{cccc}
B_1 & O & O & O\\
O &0& l&O \\
  O & l & 2\bar{l}l+b_{vv}-\lambda&O\\
  O&O&O&B_2\\
 \end{array}\right),
 \end{equation*}
 which implies that $rank(B')=n(G)+1+rank(B_2)$. Note that $rank(B(GuvH)-\lambda I)=rank(B')$,  we have $m_B(GuvH, \lambda)=m_B(H-v, \lambda)$.
\end{proof}

\begin{lem} [\cite{C.M.}]\label{lem-2-2}
 If $B\in\mathcal{S}(P_n)$, then $B$ has $n$ distinct real eigenvalues. Moreover, if $\lambda \in B(P_n)$, then $\lambda \notin B(P_{n-1})$.
\end{lem}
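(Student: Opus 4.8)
The plan is to exploit the tridiagonal (Jacobi) structure of any $B\in\mathcal S(P_n)$. Fix the natural path ordering $1,2,\dots,n$, so that $B$ is a Hermitian tridiagonal matrix with off-diagonal entries $c_i:=b_{i,i+1}\neq0$ for $1\le i\le n-1$ and real diagonal entries $a_i:=b_{ii}$. I will prove the two assertions separately, and in both the nonvanishing of the $c_i$ (which is exactly what forces the graph to be the path $P_n$) is the decisive structural input.

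For the first assertion, distinct real eigenvalues, I would argue that every eigenspace is one-dimensional. If $(B-\lambda I)x=0$, then reading the equations row by row gives $c_1x_2=(\lambda-a_1)x_1$ and, for $2\le i\le n-1$, $c_ix_{i+1}=(\lambda-a_i)x_i-\overline{c_{i-1}}\,x_{i-1}$; since each $c_i\neq0$ we may solve for $x_{i+1}$, so the whole vector $x$ is determined by $x_1$. Hence $m_B(P_n,\lambda)=\dim\ker(B-\lambda I)\le1$. Because $B$ is Hermitian its eigenvalues are real and their multiplicities sum to $n$; as each multiplicity is at most $1$, there are exactly $n$ distinct real eigenvalues.

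For the second assertion I would use the characteristic polynomials of the leading principal submatrices. Writing $p_k(\lambda)=\det(\lambda I-B(P_k))$ (so $B(P_{n-1})$ is the submatrix obtained by deleting the endpoint $n$), a cofactor expansion along the last row and column yields the three-term recurrence
\[
p_k(\lambda)=(\lambda-a_k)\,p_{k-1}(\lambda)-|c_{k-1}|^2\,p_{k-2}(\lambda),\qquad p_0=1,\ p_{-1}=0,
\]
where the off-diagonal contribution is exactly $c_{k-1}\overline{c_{k-1}}=|c_{k-1}|^2>0$ because $B$ is Hermitian. Suppose $\lambda$ were a common root of $p_n$ and $p_{n-1}$. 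Evaluating the recurrence at $k=n$ gives $0=-|c_{n-1}|^2\,p_{n-2}(\lambda)$, and since $c_{n-1}\neq0$ we obtain $p_{n-2}(\lambda)=0$; thus $\lambda$ is a common root of $p_{n-1}$ and $p_{n-2}$. Iterating this descent, $\lambda$ would have to be a common root of $p_1$ and $p_0=1$, which is impossible. Hence $\sigma(B(P_n))\cap\sigma(B(P_{n-1}))=\varnothing$, i.e.\ no eigenvalue of $B(P_n)$ is an eigenvalue of $B(P_{n-1})$; applying the same argument to the reversed ordering covers the deletion of the other endpoint.

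The one place that needs care — and the main obstacle — is making sure the Hermitian hypothesis is used correctly: the reduction to real coefficients in the recurrence relies on the product of the two symmetric off-diagonal entries being $|c_{k-1}|^2$ rather than $c_{k-1}^2$, and the one-dimensionality of eigenspaces relies on dividing by the nonzero $c_i$. Everything else is standard Jacobi-matrix bookkeeping. I would also remark that the descent argument in fact yields strict interlacing of the roots of $p_{k-1}$ and $p_k$, of which the disjointness of the two spectra is the only consequence needed here.
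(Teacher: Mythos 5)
Your proof is correct and complete, but there is nothing in the paper to compare it against: the paper states this lemma with a citation to da Fonseca \cite{C.M.} and supplies no proof of its own. Your argument is the classical, self-contained one for Hermitian Jacobi matrices, and both halves are sound. Ordering the vertices along the path makes $B$ tridiagonal with nonzero off-diagonal entries $c_i$ (this nonvanishing is exactly the hypothesis $B\in\mathcal{S}(P_n)$); solving $(B-\lambda I)x=0$ row by row shows every eigenvector is determined by $x_1$, so every eigenspace is one-dimensional and the $n$ real eigenvalues are distinct. For the second half, the three-term recurrence $p_k(\lambda)=(\lambda-a_k)p_{k-1}(\lambda)-|c_{k-1}|^2p_{k-2}(\lambda)$, whose coefficient $|c_{k-1}|^2$ is nonzero and real precisely because $B$ is Hermitian with graph $P_n$, forces any common root of $p_n$ and $p_{n-1}$ down to the contradiction $p_0(\lambda)=1=0$. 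You also handle the right reading of the statement: $P_{n-1}$ arises by deleting a pendant vertex of $P_n$ (deleting an interior vertex would disconnect the path), and the reversed ordering covers the other endpoint. In short, your write-up supplies the standard Jacobi-matrix proof of a fact the paper merely quotes, which is presumably the same argument as in the cited source; it could serve as a proof of Lemma \ref{lem-2-2} if the paper wished to be self-contained.
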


\section{Proof of Theorem \ref{thm3}}\label{s-3}
\vskip0.1in
\noindent4.1 \textit{An upper bound $m_B(T,\lambda)\le p-1$}
\vskip0.1in
da Fonseca \cite{C.M.} investigated the effect of the multiplicity of any eigenvalue of a graph by removing vertices,
we may conclude the results as the following lemma.

\begin{lem}\label{lem-removePath}
  Suppose $P:v_1v_2\cdots v_k$ ($k\ge 1$) is a path which does not intersect any cycle in $G$. Then for any $B\in \mathcal{S}(G)$,
  $m_B(G\backslash P, \lambda)\ge m_B(G, \lambda)-1$.
\end{lem}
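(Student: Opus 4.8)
The plan is to prove the statement by induction on the length $k$ of the path $P$, reducing the general case to the single-vertex removal bound of Lemma \ref{lem-2-11}. The hypothesis that $P$ does not intersect any cycle of $G$ is exactly what guarantees that each $v_i$ is, in the relevant subgraph, a vertex whose removal is controlled; I would peel off the vertices of $P$ one at a time, starting from the end that is farthest from the rest of $G$.

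\textbf{Base case.} For $k=1$ the path is a single vertex $v_1$, and $G\backslash P=G-v_1$. Lemma \ref{lem-2-11} gives directly
\[
m_B(G-v_1,\lambda)\ge m_B(G,\lambda)-1,
\]
which is exactly the claim.

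\textbf{Inductive step.} Suppose the result holds for all paths of length less than $k$, and let $P:v_1v_2\cdots v_k$ be a path in $G$ disjoint from every cycle. The key observation is that, because $P$ avoids all cycles, the vertex $v_1$ has degree at most one inside the induced structure formed by $P$ together with its attachment to $G$: along $P$ the only edges are $v_iv_{i+1}$, and $v_1$ attaches to the rest of $G$ through a single edge (otherwise a cycle through an internal vertex of $P$ would be created). I would first apply Lemma \ref{lem-2-11} to remove $v_1$, obtaining
\[
m_B(G-v_1,\lambda)\ge m_B(G,\lambda)-1 .
\]
Now $P':v_2\cdots v_k$ is a path of length $k-1$ in the graph $G-v_1$, and it still does not intersect any cycle of $G-v_1$ (deleting a vertex cannot create a new cycle). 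I would like to invoke the inductive hypothesis on $(G-v_1,P')$, but that only yields $m_B((G-v_1)\backslash P',\lambda)\ge m_B(G-v_1,\lambda)-1$, which would lose a second unit. \textbf{The main obstacle} is precisely this accumulation of $-1$'s: a naive induction degrades the bound to $m_B(G,\lambda)-k+1$ rather than $m_B(G,\lambda)-1$.

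\textbf{How I would overcome it.} The resolution is to strengthen the induction so that removing the interior and far vertices of a pendant path costs \emph{nothing}, using the sharper Lemma \ref{lem-2-1} (the $GuvH$ decomposition) together with the path fact Lemma \ref{lem-2-2}. Concretely, I would argue from the far end instead: let $v_k$ be the endpoint of $P$ of degree one in $G$ (since $P$ is internally disjoint from the cycle part and is a path, its terminal vertex is pendant in $G$, or becomes so after the cycle-free structure is isolated). Writing $G$ as $G' u v P''$ where $P''$ is the pendant path $v\cdots v_k\in\mathcal{S}(P_j)$ for suitable $j$, Lemma \ref{lem-2-2} tells us the eigenvalues of a path submatrix are simple and interlace strictly, so the single potentially "used up" multiplicity is confined to at most one step; all remaining removals along the path leave the multiplicity unchanged by Lemma \ref{lem-2-1}, since the relevant $\lambda$ either lies in the spectrum of a path block (handled by the $GuvH$ identity $m_B(GuvH,\lambda)=m_B(H-v,\lambda)$) or fails to, in which case the removal is free. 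Thus across the whole path the net loss is at most one, delivering $m_B(G\backslash P,\lambda)\ge m_B(G,\lambda)-1$ and closing the induction.
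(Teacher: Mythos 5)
Your base case is correct, and you deserve credit for honestly flagging the accumulation of $-1$'s as the central difficulty; but the way you dispose of it rests on two structural claims that are false, and the lemma you then invoke cannot carry the load. First, a vertex of $P$ may be joined to $G\setminus P$ by arbitrarily many edges without any vertex of $P$ lying on a cycle: in a double star (adjacent centers $c_1,c_2$, each with two or more leaves) the path $P: c_1c_2$ meets no cycle, yet the endpoint $c_1$ attaches to $G\setminus P$ through several edges and neither endpoint of $P$ is pendant. What the hypothesis actually gives is that every edge incident to $V(P)$ is a bridge, so each component of $G\setminus P$ attaches to exactly one vertex of $P$ by exactly one edge; it does not make $P$ a pendant path, and it does not make $v_k$ a pendant vertex (take $P$ to be a single interior vertex of a tree, or a path joining two vertices of degree at least $3$). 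Consequently the decomposition $G=G'uvP''$ with $P''$ a pendant path, on which your whole inductive step is built, does not exist in general.

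Second, even in the special case where $P$ really is a pendant path, your key assertion --- that across the whole path ``the net loss is at most one'' because $\lambda$ either lies in the spectrum of the path block (Lemma \ref{lem-2-1}) or does not (removal ``free'') --- is exactly the statement to be proved, not a proof of it. Lemma \ref{lem-2-1} requires both $\lambda\in\sigma(B(G))$ and $\lambda\notin\sigma(B(G-u))$ for the relevant side; you never verify these (Lemma \ref{lem-2-2} supplies the second condition only when the side is a path), and in the case $\lambda\notin\sigma(B(P''))$ nothing in Lemmas \ref{lem-2-1} or \ref{lem-2-2} says the removal is free: one can establish a loss of at most one there by an eigenvector-support argument, but that argument is missing, and it does not survive the passage to non-pendant paths where branches hang off every $v_i$. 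Note also that the paper itself does not prove this lemma; it is imported from da Fonseca \cite{C.M.}, whose proof uses Parter--Wiener-type machinery (tracking whether $\lambda$-eigenvectors vanish at the deleted vertex), genuinely beyond Lemmas \ref{lem-2-11}--\ref{lem-2-2}. A workable repair of your induction is the dichotomy: if every $\lambda$-eigenvector of $B(G)$ vanishes at $v_1$, then $m_B(G-v_1,\lambda)\ge m_B(G,\lambda)$ and the inductive hypothesis applied to $v_2\cdots v_k$ in $G-v_1$ costs only the single allowed unit; if some eigenvector is nonzero at $v_1$, one must then prove that deleting the remaining path vertices costs nothing --- and that is precisely the substantive content of the cited theorem, which your sketch leaves unproved.
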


 We call $P': v_1v_2\cdots v_{k}~(k\geq1)$ the pendant path of $G$ if $d_G(v_1)=1, d_G(v_2)=\cdots d_G(v_k)=2$, and another vertex, say $v_{k+1}$, adjacent to $v_k$, with degree $d_G(v_{k+1})\geq 3$. A major vertex is a vertex with a degree of at least $3$. Denoted by the vertex on the cycle of $G$ cycle-vertex. Given a graph $G$, denoted by $\mathcal{X}(G)=\{v| d(v)\geq3, v\in V(G)\}$ and $\mathcal{M}(G)=\{v| d(v)\geq3, v\in V(G) ~\text{and $v$ is not cycle-vertex}\}$.
\begin{thm}\label{thm-2}
Let $p\ge 2$ be an integer and $T$ be a tree with $p$ pendent vertices. If $\lambda$ is an eigenvalue of $B(T)\in \mathcal{S}(T)$,
then
\begin{equation}\label{equ-tree}
m_B(T, \lambda)\le p-1,
\end{equation}
and the equality holds if and only if $G$ is a path, or each path $P_i$ of $T-\mathcal{X}(T)$ with $\lambda\in \sigma(A(P_i))$ and any two major vertices of $\mathcal{X}(T)$ are not adjacent.
\end{thm}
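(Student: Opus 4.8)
The plan is to prove the inequality by induction on $p$, and then treat the two directions of the equality by different means: sufficiency through an explicit construction of $\lambda$-eigenvectors supported off the major vertices, and necessity by peeling pendant paths with Lemma \ref{lem-2-1}. For the bound I would induct on $p$. When $p=2$ the tree is a path $P_n$, and Lemma \ref{lem-2-2} shows every eigenvalue is simple, so $m_B(T,\lambda)=1=p-1$. For $p\ge 3$ the tree has a major vertex, hence a pendant path $P'\colon v_1\cdots v_k$ attached to a vertex $v_{k+1}$ of degree at least $3$. Deleting $P'$ yields a tree $T\backslash P'$ whose only removed leaf is $v_1$, since $d(v_{k+1})$ drops to at least $2$ but never to $1$; thus $T\backslash P'$ has $p-1$ pendant vertices. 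Combining $m_B(T\backslash P',\lambda)\ge m_B(T,\lambda)-1$ from Lemma \ref{lem-removePath} with the inductive bound $m_B(T\backslash P',\lambda)\le p-2$ gives $m_B(T,\lambda)\le p-1$.

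For sufficiency, suppose $T$ is not a path, every path $P_i$ of $T-\mathcal{X}(T)$ satisfies $\lambda\in\sigma(B(P_i))$, and no two major vertices are adjacent. I would look for eigenvectors that vanish on $\mathcal{X}(T)$ and restrict to a $\lambda$-eigenvector on each $P_i$. By Lemma \ref{lem-2-2} each $B(P_i)$ has a one dimensional $\lambda$-eigenspace spanned by some $\phi_i$ which, because $\lambda\notin\sigma(B(P_i-\text{endpoint}))$, is nonzero at both endpoints. For $\psi=\sum_i c_i\widetilde{\phi_i}$ (with $\widetilde{\phi_i}$ the zero extension of $\phi_i$) the eigenequation holds automatically at every path vertex and reduces at each major vertex $x$ to the single condition $\sum_{w\sim x}b_{xw}\psi(w)=0$. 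This is a homogeneous system of $s:=|\mathcal{X}(T)|$ equations in $r$ unknowns, where $r$ is the number of paths. A handshake count gives $\sum_{x\in\mathcal{X}(T)}d(x)=p+2s-2$, and since no two majors are adjacent the forest $T-\mathcal{X}(T)$ satisfies $r=(p+2s-2)-s+1=p+s-1$; hence the solution space has dimension at least $r-s=p-1$. The resulting $\psi$'s are linearly independent, so $m_B(T,\lambda)\ge p-1$, and with the bound the equality follows.

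For necessity, assume $m_B(T,\lambda)=p-1$ with $T$ not a path. I would first force each pendant path to carry $\lambda$: if a pendant path $P'$ had $\lambda\notin\sigma(B(P'))$, then $B(P')-\lambda I$ is invertible and a Schur complement collapses $P'$ onto its attaching vertex, giving $m_B(T,\lambda)=m_{B'}(T-V(P'),\lambda)$ for some $B'\in\mathcal{S}(T-V(P'))$; but $T-V(P')$ still has $p-1$ leaves, so the already proved bound yields $m_B(T,\lambda)\le p-2$, a contradiction. Thus $\lambda\in\sigma(B(P'))$, whereupon Lemma \ref{lem-2-2} supplies the hypothesis of Lemma \ref{lem-2-1} and gives $m_B(T,\lambda)=m_B(T-V(P')-v_{k+1},\lambda)$. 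Iterating this peeling on the resulting forest both certifies $\lambda\in\sigma(B(P_i))$ for each peeled path and turns internal paths into pendant paths of the smaller forest, so that condition (a) propagates to all of $T-\mathcal{X}(T)$; the non-adjacency condition (b) is then forced by comparing the attained value $p-1$ with the construction of the previous paragraph, which for $m_{MM}$ edges joining two major vertices only produces dimension $r-s=p-1-m_{MM}$, so that $m_{MM}=0$.

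The main obstacle is the necessity direction, specifically the bookkeeping of the peeling recursion: tracking how $\mathcal{X}(T)$, the path count $r$, and the pendant count evolve when a degree-$3$ endpoint drops to degree $2$ or when deleting a major vertex fragments the tree into a forest, together with reconciling the forest version of the bound with the tree statement at each step. The most delicate point is ruling out that eigenvectors which do not vanish on $\mathcal{X}(T)$ (a Parter--Wiener type contribution) quietly restore the missing multiplicity when (b) fails, since my count only controls the eigenvectors supported on the paths.
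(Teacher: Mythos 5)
Your proof of the upper bound is the paper's own argument (induction on $p$, deleting a pendant path via Lemma \ref{lem-removePath}, base case by Lemma \ref{lem-2-2}), and two of your departures from the paper are sound. The sufficiency count --- eigenvectors vanishing on $\mathcal{X}(T)$, restricting to $\lambda$-eigenvectors on the paths, with solution space of dimension at least $r-s=p-1$ --- is correct, and is a constructive substitute for the paper's shorter interlacing argument $m_B(T,\lambda)\ge m_B(T-\mathcal{X}(T),\lambda)-|\mathcal{X}(T)|=l-t=p-1$ from Lemma \ref{lem-2-11}. Your Schur-complement argument that every pendant path must carry $\lambda$ is also valid (it correctly exploits that matrices in $\mathcal{S}$ have unconstrained diagonal, so the complement stays in $\mathcal{S}(T-V(P'))$), and is arguably cleaner than the paper's route to the same claim.

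The genuine gap is in the remainder of the necessity direction, and it is exactly the point you flag yourself. To force condition (b) you argue that if $m_{MM}>0$ edges join major vertices, then ``the construction only produces dimension $r-s=p-1-m_{MM}$, so $m_{MM}=0$.'' This is a logical error: your construction gives only a \emph{lower} bound on $m_B(T,\lambda)$, namely the dimension of the $\lambda$-eigenvectors vanishing on $\mathcal{X}(T)$. Nothing in your argument prevents eigenvectors that are nonzero on $\mathcal{X}(T)$ from supplying the missing $m_{MM}$ dimensions --- precisely the Parter--Wiener contribution you concede you cannot rule out. The same incompleteness affects condition (a) for internal paths: ``iterating the peeling'' on the forest $T-V(P')-v_{k+1}$ needs a forest version of the bound and careful bookkeeping (single-vertex components, how pendant counts split among components, which components inherit equality), which you name as an obstacle but never carry out. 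So as written, necessity is established only for pendant paths, not for internal paths or for non-adjacency of major vertices.

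The idea your outline is missing is the one the paper uses: run the induction on $p$ with the \emph{full equivalence} (both directions of the characterization) as the induction hypothesis, and compare the structures of $T-P^i$ for different pendant paths $P^i$. From $m_B(T,\lambda)=p-1$ and Lemma \ref{lem-removePath} one gets $m_B(T-P^i,\lambda)=p(T-P^i)-1$, so the induction hypothesis describes $T-P^i$ completely. In the only delicate case --- every major vertex meeting a pendant path has degree $3$, say $x_1$ carries pendant paths $P^1,P^2$ and an internal path $P^{12}$ toward $x_2$ --- deleting $P^1$ turns $P^2\cup\{x_1\}\cup(P^{12}\setminus\{x_2\})$ into a single path of $(T-P^1)-\mathcal{X}(T-P^1)$, which by the induction hypothesis carries $\lambda$; since $\lambda\in\sigma(B(P^2))$ as well, Lemmas \ref{lem-2-1} and \ref{lem-2-2} force $\lambda\in\sigma\bigl(B(P^{12}\setminus\{x_1,x_2\})\bigr)$, which in particular makes this segment nonempty, i.e. $x_1\nsim x_2$. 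This single step delivers both (a) for internal paths and (b) simultaneously; your proposal has no working substitute for it.
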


\begin{proof}
  We first prove the upper bound $m_B(T, \lambda)\le p-1$.
  We will proceed by induction on the number of pendant vertices $p$ of $T$.

If $p(T)=2$, then $T\cong P_n$.  According to Lemma \ref{lem-2-2}, $m_B(T, \lambda)=1=p(T)-1$, as required. Next, assume for all tree with $p(T)\leq t-1~(t\geq3)$, $m_B(T, \lambda)\leq p(T)-1$.
Suppose $T$ is a tree with $t$ pendant vertices, we can choose a pendent path $P:u_1u_2\cdots u_k$ ($k\ge 1$) with
$d(u_1)=1$, $d(u_2)=\cdots =d(u_{k})=2$ (if exists).
Then $p(T\backslash P)=t-1< t$.
Thus, by Lemma \ref{lem-removePath} and the induction hypothesis, we have
\begin{equation*}
\begin{array}{lll}
m_B(T, \lambda)&\leq& m_B(T\backslash P, \lambda)+1\le (p(T\backslash P)-1)+1\\
&=&p(T\backslash P)=t-1=p-1.
\end{array}
\end{equation*}
The result follows by the principle of induction.

We next characterize the graphs with $m_B(T, \lambda)=p(T)-1$.
If $p(T)=2$, then $T$ is a path, and $m_B(T, \lambda)=1$ for any eigenvalue of $B(T)$ by Lemma \ref{lem-2-2}.
For $p(T)\ge 3$, we will prove by induction on $p(T)$.
If $p(T)=3$, then $m_B(T, \lambda)=2$ and there is only one major vertex of $T$, say $w~(d(w)\geq3)$.
Suppose the paths of $T-w$ are $P_{t_1}:u_1u_2\cdots u_{t_1}$, $P_{t_2}:v_1v_2\cdots v_{t_2}$ and $P_{t_3}:w_1w_2\cdots w_{t_3}$.
For the sufficiency part, if $\lambda\in \sigma(B(P_i))~(1\leq i\leq 3)$, by lemma \ref{lem-2-1}, $m_B(T, \lambda)=m_B(T-P_{t_1}-w, \lambda)=2$, as required.
For the necessity part,
since $m_B(T, \lambda)=2$, by Lemma \ref{lem-2-11}, there at least one path of $T-w$, say $P_{t_1}$ with $\lambda\in \sigma(B(P_{t_1}))$.
If there at least one path $B(P_{t_i})~(i=2,3)$ of $B(T-w)$, $\lambda\notin B(P_{t_i}) $, by Lemma \ref{lem-2-1}, $m_B(T, \lambda)=m_B(T-P_{t_1}-w, \lambda)\leq 1$, a contradiction.
Then the result holds for $p=3$.

Next, assume that the graphs with $m_B(T, \lambda)=p(T)-1~(3\leq p(T)\leq k-1)$ if and only if each path $P$ of $T-\mathcal{X}(T)$ with $\lambda\in \sigma(B(P))$ and any two major vertices of $\mathcal{X}(T)$ are not adjacent.
Suppose $T$ is a tree with $k$ pendent vertices, and the corresponding pendant paths are  $P^i(1\leq i\leq k)$.
Assume $\mathcal{X}(T)=\{x_1, x_2, \ldots, x_l\}~(l< k)$ is a major vertex set of $T$, and the  internal path (if exists) between $x_i$ and $x_j$ denoted by $P^{ij}~(1\leq i, j\leq l)$.
We first claim if $m_B(T, \lambda)=k-1$, then for each pendant path $P^i(1\leq i\leq k)$ of $T$, $\lambda\in \sigma(B(P^i))$.
By Lemma \ref{lem-removePath}, we have
\begin{equation*}
k-1=m_B(T, \lambda)\leq m_B(T-P^i, \lambda)+1\leq k-1,
\end{equation*}
which means $m_B(T-P^i, \lambda)=k-2=p(T-P^i)-1$.
Since $p(T-P^i)=k-1$ and $k\geq4$, by induction hypothesis, each path $P$ of $T-P^i-X(T-P^i)$ with $\lambda\in \sigma(B(P))$.
Therefore, for every pendant path $P^i(1\leq i\leq k)$ of $T$, $\lambda\in \sigma(B(P^i))$.
If there exists a major vertex, say $x_1$,
with $d(x_1)\geq 4$ and $P^1$ is adjacent to $x_1$, then $X(T-P^1)=\mathcal{X}(T)$.
By induction hypothesis, each paths $P$ of $T-P^1-\mathcal{X}(T)$  with $\lambda\in \sigma(B(P))$, as required.
Otherwise, any major vertex adjacent to pendant paths of $T$ with degree $3$. Suppose $x_1$ is a major vertex adjacent two pendant paths $P^1$ and $P^2$, and with internal path $P^{12}$ between $x_1$ and $x_2$ (see Fig.\,\ref{fig-1}).
Based on the above analysis, each path $P$ of $T-P^1-X(T-P^1)$ with $\lambda\in \sigma(B(P))$, where $X(T-P^1)=\mathcal{X}(T)/\{x_1\}$.
Thus, $\lambda\in \sigma(B(P_2\cup P^{12}/\{x_2\}))$.
Note that $\lambda\in \sigma(B(P^2))$.
By applying  Lemmas \ref{lem-2-1} and \ref{lem-2-2}, $\lambda\in \sigma(B(P^{12}/\{x_1, x_2\}))$, which also means $x_1\nsim x_2$, as required.

For the sufficiency part, suppose there are $l$ paths of $T-\mathcal{X}(T)$ and $|\mathcal{X}(T)|=t~(t\geq1)$.
Note that $\lambda\in \sigma(A(P_i))~(1\leq i\leq l)$ and any two major vertices of $\mathcal{X}(T)$ are not adjacent.
Obviously, $p(T)=l-t+1$.
By Lemma \ref{lem-2-11},
\begin{equation*}
\begin{array}{lll}
m_B(T, \lambda)&\geq&m_B(T-\mathcal{X}(T), \lambda)-t\\
&=&l-t\\
&=&p(T)-1.
\end{array}
\end{equation*}
Combining with (\ref{equ-tree}), the result follows.
\end{proof}

\vskip0.3in
\begin{figure}[htbp]
\centering
\begin{tikzpicture}[x=1.00mm, y=1.00mm, inner xsep=0pt, inner ysep=0pt, outer xsep=0pt, outer ysep=0pt,scale=0.7]
\path[line width=0mm] (82.50,51.53) rectangle +(97.40,40.86);
\definecolor{L}{rgb}{0,0,0}
\definecolor{F}{rgb}{0,0,0}
\path[line width=0.30mm, draw=L, fill=F] (93.02,88.93) circle (1.00mm);
\path[line width=0.30mm, draw=L, fill=F] (112.76,89.40) circle (1.00mm);
\path[line width=0.30mm, draw=L, fill=F] (112.76,69.42) circle (1.00mm);
\path[line width=0.30mm, draw=L, fill=F] (93.49,68.72) circle (1.00mm);
\path[line width=0.30mm, draw=L, fill=F] (122.87,79.06) circle (1.00mm);
\path[line width=0.30mm, draw=L, fill=F] (133.21,79.29) circle (1.00mm);
\path[line width=0.30mm, draw=L, fill=F] (152.72,79.06) circle (1.00mm);
\path[line width=0.30mm, draw=L, fill=F] (100.54,89.16) circle (0.50mm);
\path[line width=0.30mm, draw=L, fill=F] (103.83,89.16) circle (0.50mm);
\path[line width=0.30mm, draw=L, fill=F] (107.12,89.40) circle (0.50mm);
\path[line width=0.30mm, draw=L, fill=F] (100.31,68.95) circle (0.50mm);
\path[line width=0.30mm, draw=L, fill=F] (104.07,68.95) circle (0.50mm);
\path[line width=0.30mm, draw=L, fill=F] (107.83,69.19) circle (0.50mm);
\path[line width=0.30mm, draw=L, fill=F] (139.56,79.06) circle (0.50mm);
\path[line width=0.30mm, draw=L, fill=F] (144.02,79.06) circle (0.50mm);
\path[line width=0.30mm, draw=L, fill=F] (147.78,79.06) circle (0.50mm);
\path[line width=0.30mm, draw=L, fill=F] (85.50,88.69) circle (1.00mm);
\path[line width=0.30mm, draw=L, fill=F] (85.97,68.72) circle (1.00mm);
\path[line width=0.30mm, draw=L] (85.50,88.69) -- (93.49,88.93);
\path[line width=0.30mm, draw=L] (85.73,68.72) -- (94.20,68.72);
\path[line width=0.30mm, draw=L] (113.00,89.16) -- (122.87,79.29);
\path[line width=0.30mm, draw=L] (113.00,69.42) -- (123.10,78.35);
\path[line width=0.30mm, draw=L] (122.87,79.06) -- (133.92,79.06);
\path[line width=0.30mm, draw=L, fill=F] (163.06,88.46) circle (1.00mm);
\path[line width=0.30mm, draw=L, fill=F] (163.30,69.89) circle (1.00mm);
\path[line width=0.30mm, draw=L] (163.30,88.22) -- (152.95,78.82);
\path[line width=0.30mm, draw=L] (163.77,69.42) -- (163.30,69.89);
\path[line width=0.30mm, draw=L] (152.25,79.76) -- (163.77,69.66);
\draw(94.67,83.76) node[anchor=base west]{\fontsize{12.23}{17.07}\selectfont $P_1$};
\draw(94.90,63.31) node[anchor=base west]{\fontsize{12.23}{17.07}\selectfont $P_2$};
\draw(120.75,73.89) node[anchor=base west]{\fontsize{12.23}{17.07}\selectfont $x_1$};
\draw(147.78,73.42) node[anchor=base west]{\fontsize{12.23}{17.07}\selectfont $x_2$};
\draw(129.22,81.64) node[anchor=base west]{\fontsize{12.23}{17.07}\selectfont $P^{12}$};
\path[line width=0.30mm, draw=L, fill=F] (168.47,88.46) circle (0.50mm);
\path[line width=0.30mm, draw=L, fill=F] (172.93,88.46) circle (0.50mm);
\path[line width=0.30mm, draw=L, fill=F] (176.93,88.69) circle (0.50mm);
\path[line width=0.30mm, draw=L, fill=F] (167.76,69.89) circle (0.50mm);
\path[line width=0.30mm, draw=L, fill=F] (172.23,69.42) circle (0.50mm);
\path[line width=0.30mm, draw=L, fill=F] (177.40,69.66) circle (0.50mm);
\end{tikzpicture}%
\caption{ A tree $T$ described in proof of Theorem \ref{thm-2}}\label{fig-1}
\end{figure}
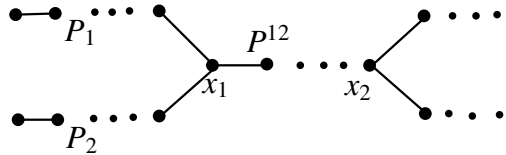
\vskip0.2in

As applications, according to Theorem \ref{thm-2}, we obtain the following results established originally in \cite{Chang} and \cite{Wang3}.
\begin{cor}[\cite{Chang}]\label{cor-1}
 Let $T$ be a tree with $p(T) \geq 3$ and $\mathcal{X}(T)$ be the set of all vertices with degree at least 3 in $T$.
 Then $\eta(T)=p(T)-1$ if and only if the following conditions are both satisfied:
\begin{enumerate}[(a)]
\vspace{-0.2cm}
   \item For any leaf $v$ of $T, d_T\left(v, \mathcal{X}(T)\right)$ is odd ;
\vspace{-0.2cm}
   \item For any $u_1, u_2 \in \mathcal{X}(T), d_T\left(u_1, u_2\right)$ is even.
\vspace{-0.2cm}
\end{enumerate}
\end{cor}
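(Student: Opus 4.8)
The plan is to obtain Corollary~\ref{cor-1} as a direct specialization of Theorem~\ref{thm-2}, taking $B = A$ the adjacency matrix and $\lambda = 0$, so that $m_A(T,0) = \eta(T)$. Since $p(T) \ge 3$, the tree $T$ is not a path, and hence the equality $\eta(T) = p(T)-1$ holds exactly when the second clause of Theorem~\ref{thm-2} is satisfied: every path $P_i$ of $T - \mathcal{X}(T)$ has $0 \in \sigma(A(P_i))$, and no two vertices of $\mathcal{X}(T)$ are adjacent. The whole task thus reduces to showing that this pair of conditions is equivalent to the metric conditions (a) and (b).

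The single arithmetic input I need is the classical fact that $0 \in \sigma(A(P_n))$ if and only if $n$ is odd, which is immediate from the eigenvalues $2\cos\frac{k\pi}{n+1}$ ($1 \le k \le n$) of $A(P_n)$. First I would split the components of $T - \mathcal{X}(T)$ into pendant paths and internal paths; since $p(T)\ge 3$ guarantees $\mathcal{X}(T)\neq\emptyset$, each component contains at most one leaf of $T$, so this dichotomy is exhaustive. A pendant path ending at a leaf $v$ and meeting $\mathcal{X}(T)$ at a major vertex $w$ has exactly $d_T(v,\mathcal{X}(T)) = d_T(v,w)$ vertices; hence ``$0\in\sigma(A(P_i))$ for every pendant path'' is equivalent, component by component, to ``$d_T(v,\mathcal{X}(T))$ is odd for every leaf $v$'', which is condition (a). An internal path joining two consecutive major vertices $u_1, u_2$ (none strictly between them) contains exactly $d_T(u_1,u_2)-1$ vertices; hence ``$0\in\sigma(A(P_i))$ for every internal path'' is equivalent to ``$d_T(u_1,u_2)$ is even for every such consecutive pair''.

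It remains to reconcile the consecutive-pair statement with condition (b), which quantifies over all pairs, using the non-adjacency clause. In one direction, the non-adjacency of major vertices is precisely the exclusion of the case $d_T(u_1,u_2)=1$, so together with the internal-path parity it yields even distance for every consecutive pair; for an arbitrary pair $u_1,u_2\in\mathcal{X}(T)$ I would take the unique $T$-path between them, list the major vertices $u_1 = y_0, y_1, \dots, y_r = u_2$ it traverses in order, and write $d_T(u_1,u_2) = \sum_{j=0}^{r-1} d_T(y_j, y_{j+1})$ as a sum of consecutive (even) distances, which is even. Conversely, condition (b) forces every consecutive distance to be even (so in particular no $d_T=1$, giving non-adjacency) and, via (a), every pendant distance to be odd, recovering exactly the two conditions of Theorem~\ref{thm-2}.

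The step I expect to require the most care is this last telescoping over the chain of major vertices, because Theorem~\ref{thm-2} constrains only the segments of $T-\mathcal{X}(T)$, i.e. the consecutive pairs, whereas (b) ranges over all pairs. Checking that ``consecutive evenness plus non-adjacency'' is genuinely equivalent to ``(b) for all pairs'', with no cases lost or gained when several internal paths share a common major vertex, is the only genuinely combinatorial point; everything else is bookkeeping on path lengths together with the single eigenvalue fact for $A(P_n)$.
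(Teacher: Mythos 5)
Your proposal is correct and takes essentially the same route as the paper: both obtain the corollary by specializing Theorem \ref{thm-2} to $B=A$ and $\lambda=0$ and invoking the parity fact that $0\in\sigma(A(P_n))$ exactly when $n$ is odd. The only difference is one of detail—the paper asserts the equivalence of the structural conditions in a single sentence, while you spell out the pendant/internal path dichotomy and the telescoping of even distances along chains of major vertices, which is just a more careful write-up of the same argument.
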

\begin{proof}
Note that $\eta(P_n)=1$ if $n$ is odd. Therefore, the condition of Theorem \ref{thm-2} is equivalent to that for any leaf $v$ of $T, d_T\left(v, \mathcal{X}(T)\right)$ is odd, and for any $u_1, u_2 \in \mathcal{X}(T), d_T\left(u_1, u_2\right)$ is even, as required.
\end{proof}

\begin{cor}[\cite{Wang3}]\label{cor-2}
Let $T$ be a tree with $p(T) \geq 2$ pendant vertices. Then $m(T,-1)\leq p(T)-1$, and the equality hold if and only if $T=P_n$ with $n \equiv 2(\bmod\, 3)$, or $T$ is a tree in which $d(v, u) \equiv 2(\bmod\, 3)$ for any pendant vertex $v$ and any major vertex $u$ of $T$.
\end{cor}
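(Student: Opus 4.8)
The plan is to specialize Theorem \ref{thm-2} to the adjacency matrix and the eigenvalue $-1$. Since $A(T)$ is a Hermitian matrix whose graph is $T$, we have $A(T)\in\mathcal{S}(T)$, so Theorem \ref{thm-2} applies verbatim with $B=A$ and $\lambda=-1$, giving $m(T,-1)\le p(T)-1$ at once. The whole content of the corollary is therefore to rewrite the equality condition of Theorem \ref{thm-2} in the distance language used in \cite{Wang3}.

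The single spectral input I would isolate first is the fact that $-1\in\sigma(A(P_k))$ if and only if $k\equiv 2\pmod{3}$. This follows from the explicit eigenvalues $2\cos\frac{j\pi}{k+1}$ $(1\le j\le k)$: the value $-1$ is attained iff $3j=2(k+1)$ is solvable in that range, i.e.\ iff $3\mid(k+1)$. The path case is then immediate: if $T=P_n$ then $p(T)=2$, the eigenvalues are simple by Lemma \ref{lem-2-2}, so $m(T,-1)=1=p(T)-1$ exactly when $-1$ is an eigenvalue, i.e.\ when $n\equiv 2\pmod{3}$, which is the first alternative of the corollary.

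For a tree with major vertices I would translate the two equality conditions of Theorem \ref{thm-2} — namely (i) every component path $P_i$ of $T-\mathcal{X}(T)$ satisfies $-1\in\sigma(A(P_i))$, and (ii) no two major vertices are adjacent — into the statement $d(v,u)\equiv 2\pmod{3}$ for every pendant vertex $v$ and every major vertex $u$. For the forward implication: a pendant path on $k$ vertices attached to a major vertex $u_0$ has $d(v,u_0)=k\equiv 2\pmod{3}$ by (i); an internal path on $m$ vertices joining two consecutive major vertices $x_i,x_j$ (i.e.\ with no major vertex strictly between them) has $m\equiv 2\pmod{3}$ by (i), so $d(x_i,x_j)=m+1\equiv 0\pmod{3}$; writing the $v$-to-$u$ geodesic as the pendant segment followed by a chain of internal segments and summing residues yields $d(v,u)\equiv 2\pmod{3}$. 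For the converse I would recover (i) and (ii): pendant paths inherit the residue $2\pmod 3$ directly, while for any two consecutive major vertices $x_i,x_j$ I would select a leaf $v$ on the far side of $x_i$ — such a leaf exists because $\deg x_i\ge 3$ leaves a branch at $x_i$ avoiding the direction of $x_j$ — so that $d(v,x_j)=d(v,x_i)+d(x_i,x_j)$ with both left distances $\equiv 2$, forcing $d(x_i,x_j)\equiv 0\pmod{3}$; this simultaneously gives non-adjacency (since $d\ge 1$ and $d\equiv 0$ force $d\ge 3$) and the internal-path residue $m\equiv 2\pmod{3}$, which is (i). Combining both directions shows the two equality conditions coincide, and the corollary follows.

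The main obstacle is precisely this bidirectional translation rather than any spectral computation: one must carefully book-keep residues modulo $3$ along geodesics that may traverse several major vertices in sequence, and, in the converse direction, justify the existence of a leaf positioned so that its distances to two prescribed major vertices differ by exactly $d(x_i,x_j)$. Once the chain-of-residues bookkeeping and this leaf-existence argument are settled, the equivalence with the hypotheses of Theorem \ref{thm-2} is complete.
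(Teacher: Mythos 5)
Your proposal is correct and takes essentially the same route as the paper: specialize Theorem \ref{thm-2} to $B=A(T)$ and $\lambda=-1$, and use the fact that $-1\in\sigma(A(P_k))$ if and only if $k\equiv 2\ (\bmod\ 3)$. The paper's proof simply asserts that the equality condition of Theorem \ref{thm-2} is equivalent to the distance condition, whereas you carry out the mod-$3$ bookkeeping along geodesics (including the leaf-existence argument for the converse); this fills in details the paper omits but is not a different approach.
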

\begin{proof}
Note that $\sigma(P_n)=\{2\cos\frac{\pi j}{n+1}, j=1, \ldots, n\}$. Then, $2\cos\frac{\pi j}{n+1}=-1$ if and only if $n=3k-1~(k\geq1)$. Therefore, the condition of Theorem \ref{thm-2} is equivalent to that $d(v, u) \equiv 2(\bmod\, 3)$ for any pendant vertex $v$ and any major vertex $u$ of $T$.
\end{proof}

\vskip0.3in

\noindent4.2 \textit{Proof of Theorem \ref{thm3}}

\vskip 0.1in

An $\infty(p, q, l)$-graph is obtained from two vertex-disjoint cycles $C_p$ and $C_q$ by connecting one vertex of $C_p$ and one vertex of $C_q$ by a path of length $l-1~(l\geq2)$.
When $l=1, \infty(p, q, l)$ consists of the cycles $C_p$ and $C_q$ with one common vertex.
The $\theta(p, q, l)$-graph is union of three internally disjoint paths $P_{p+1},~P_{q+1},~P_{l+1}$ of length $p,~q,~l$, respectively with common end vertices, where $p,~q,~l\geq1$ and at most one of them is $1$ (see Fig. \ref{fig-2}).

\noindent{\bf{Proof of Theorem \ref{thm3}:}}
To prove this Theorem, we only need to show that $m_B(G, \lambda)\leq2\theta(G)+p(G)-1$ when $G\ncong C_n$. If $G$ is isomorphic to a $\theta$-graph~(see Fig. \ref{fig-2}), suppose $x$ is a vertex of $G$ with degree $3$, by Lemma \ref{lem-2-11} and Theorem \ref{thm-2}, we have
\begin{align*}
m_B(G, \lambda)&\leq m_B(G-x, \lambda)+1\\
&\leq p(G-x)-1+1\\
&=3.
\end{align*}

Next, we only consider the case $G$ is not isomorphic to a $\theta$-graph. We proceed by induction on $\theta(G)$ to prove that $m_B(G, \lambda)\leq 2\theta(G)+p(G)-1$ for any eigenvalue $\lambda\in B(G)$.
If $\theta(G)=0$, the assertion holds by Theorem \ref{thm-2}.
Assume the assertion holds for all connected graphs with $\theta(G)\leq k-1$ and let $G$ be a connected graph with $\theta(G)=k~(k\geq2)$.
If there exists a pendent cycle of $G$, say  $C: v_1v_2\cdots v_tv_1~(t\geq3)$ with $d(v_2)=\cdots =d(v_t)=2$ and $d(v_1)\geq 3$.
Since $\theta(G-v_2)=\theta(G)-1=k-1$ and $p(G-v_2)=p(G)+1$,
by Lemma \ref{lem-2-11}, we have
\begin{align*}
m_B(G, \lambda)&\leq m_B(G-v_2, \lambda)+1\\
&\leq2\theta(G-v_2)+p(G-v_2)-1+1\\
&=2(k-1)+p(G)+1-1+1\\
&=2\theta(G)+p(G)-1.
\end{align*}

Otherwise, there exists a cycle with an internal path, say $P:u_1u_2\cdots u_k$ ($k\ge 2$) , where $d(u_2)=\cdots =d(u_{k-1})=2$ (if exists), and $d(u_1), d(u_k)\geq3$. Obviously, $\theta(G-u_2)\leq \theta(G)-1=k-1$ and $p(G-u_2)\leq p(G)+1$.
{\flushleft  \bf Case 1.}  $k=2$.

Let $e=u_1u_2$. Note that $p(G-e)=p(G)$ and $G-e\ncong C_n$ for $G$ is not isomorphic to a $\theta$-graph. By Lemma \ref{lem-2-12}, we have
\begin{align*}
m_B(G, \lambda)&\leq m_B(G-e, \lambda)+2\\
&\leq2\theta(G-e)+p(G-e)-1+2\\
&\leq2(k-1)+p(G)-1+2\\
&=2\theta(G)+p(G)-1.
\end{align*}

{\flushleft  \bf Case 2.} $k\geq3$.

Note that $p(G-u_2)\leq p(G)+1$ and $G-u_2\ncong C_n$ for $G$ is not isomorphic to a $\theta$-graph. By Lemma \ref{lem-2-11}, we have
\begin{align*}
m_B(G, \lambda)&\leq m_B(G-u_2, \lambda)+1\\
&\leq2\theta(G-u_2)+p(G-u_2)-1+1\\
&\leq2(k-1)+p(G)+1\\
&=2\theta(G)+p(G)-1.
\end{align*}
The proof is complete.

A complex unit gain graph (or $\mathbb{T}$-gain graph) is a triple $\Phi=(G, \mathbb{T}, \varphi)$  consisting of a simple graph $G$, as the underlying graph of $\Phi$, the set of unit complex numbers $\mathbb{T}=\{z \in C:|z|=1\}$ and a gain function $\varphi: \vec{E} \rightarrow \mathbb{T}$ with the property that $\varphi(e_{u v})=\varphi(e_{v u})^*$.
Given a $\mathbb{T}$-gain graph $\Phi_G$, its adjacency matrix $A\left(\Phi_G\right)=\left(a_{u v}^{\varphi}\right)$ is defined as
$$
a_{u v}^{\varphi}= \begin{cases}\varphi\left(e_{u v}\right), & \text { if } u \text { and } v \text { are adjacent } \\ 0, & \text { otherwise. }\end{cases}
$$
If $u, v$ are adjacent in graph $G$, then $a_{u v}^{\varphi}=\varphi\left(e_{u v}\right)=\varphi^{-1}\left(e_{v u}\right)=\overline{\varphi\left(e_{v u}\right)}=\overline{\left(a_{v u}^{\varphi}\right)}$.
It is obvious that an undirected graph $G$ is just a complex unit gain graph $\phi$ with $\varphi(\vec{E})\subseteq\{1 \}$;
a \emph{signed graph} $(G,\sigma) $ is a complex unit gain graph $\phi$ with $\varphi(\vec{E})\subseteq\{1, -1\}$;
a \emph{mixed graph} $\tilde{G}$ is a complex unit gain graph $\phi$ with $\varphi(\vec{E})\subseteq\{1, i, -i\}$.
Let $(C_{n},\varphi)\,(n \geq 3)$ be a complex unit gain cycle. The gain of a cycle $C_n:  v_1 \cdots v_n v_1$  is denote by $\varphi(C_n)$, where
$\varphi\left(C_{n}\right)=\varphi(v_{1} v_{2}) \varphi(v_{2} v_{3}) \cdots \varphi(v_{n-1} v_{n}) \varphi(v_{n} v_{1}).$
Let $\mathfrak{D}(G)$ be the diagonal matrix whose diagonal entries are the degrees in  $G$,
and $A_\alpha\left(\Phi_G\right)=\alpha \mathfrak{D}(G)+(1-\alpha) A\left(\Phi_G\right)$, where $\alpha\in[0,1]$.
\begin{lem}[\cite{Li}]\label{Li}
Let $\Phi_{C_n}$ be a $\mathbb{T}$-gain graph with underlying graph $C_n$ and $\varphi\left(C_n\right)=\mathrm{e}^{\mathrm{i} \rho}$. Then $mult_{A_\alpha(\Phi_{C_n})}(C_n, \lambda) \leqslant 2$ for all $\lambda \in \mathbb{R}$ and $\alpha \in[0,1)$. The equality holds if and only if one of the following conditions holds:
\begin{enumerate}[(1)]
\vspace{-0.2cm}
\item $\rho=0$ and $\lambda=2 \alpha+2(1-\alpha) \cos \frac{2 j \pi}{n}, j=1,2, \ldots,\left\lceil\frac{n}{2}\right\rceil-1$;
\vspace{-0.2cm}
\item $\rho=\pi$ and $\lambda=2 \alpha+2(1-\alpha) \cos \frac{(2 j+1) \pi}{n}, j=1,2, \ldots,\left\lfloor\frac{n}{2}\right\rfloor-1$.
\vspace{-0.2cm}
\end{enumerate}
\end{lem}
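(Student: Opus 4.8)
The plan is to diagonalise $A_\alpha(\Phi_{C_n})$ explicitly and then count how many of its eigenvalues can coincide. First I would use that every vertex of a cycle has degree $2$, so $\mathfrak{D}(C_n)=2I$ and $A_\alpha(\Phi_{C_n})=2\alpha I+(1-\alpha)A(\Phi_{C_n})$. Since $\alpha\in[0,1)$ gives $1-\alpha>0$, the affine map $\mu\mapsto 2\alpha+(1-\alpha)\mu$ is an increasing bijection of $\mathbb{R}$ that preserves multiplicities; hence it suffices to find the spectrum of the gain adjacency matrix $A(\Phi_{C_n})$ and transport it through this map.

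Next I would reduce to a circulant by switching. Conjugation of $A(\Phi_{C_n})$ by a diagonal unitary $U=\mathrm{diag}(u_1,\dots,u_n)$ with $|u_j|=1$ preserves the spectrum and changes the individual edge gains while leaving the cycle gain $\varphi(C_n)=\mathrm{e}^{\mathrm{i}\rho}$ invariant; this is the standard fact that the adjacency spectrum of a gain graph is a switching invariant. Taking $u_j=\mathrm{e}^{\mathrm{i}j\rho/n}$ redistributes the phase evenly, so that $A(\Phi_{C_n})$ becomes the circulant $M=\omega S+\bar\omega S^{*}$, where $S$ is the cyclic shift and $\omega=\mathrm{e}^{\mathrm{i}\rho/n}$ (note $\omega^{n}=\mathrm{e}^{\mathrm{i}\rho}$, so the cycle gain is correct). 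Diagonalising $M$ in the Fourier basis $(1,\zeta^{k},\dots,\zeta^{(n-1)k})$ with $\zeta=\mathrm{e}^{2\pi\mathrm{i}/n}$ yields
\[ \mu_k=2\cos\frac{\rho+2k\pi}{n},\qquad k=0,1,\dots,n-1, \]
and therefore the eigenvalues of $A_\alpha(\Phi_{C_n})$ are $2\alpha+2(1-\alpha)\cos\frac{\rho+2k\pi}{n}$.

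For the bound I would note that the angles $\psi_k=(\rho+2k\pi)/n$ are pairwise distinct modulo $2\pi$ and equally spaced, while for a fixed value the equation $2\cos\psi=\lambda$ has at most two solutions in one period; hence at most two indices $k$ produce the same $\mu_k$, giving $mult_{A_\alpha(\Phi_{C_n})}(C_n,\lambda)\le 2$. For equality I would examine when $\mu_k=\mu_{k'}$ with $k\ne k'$: the option $\psi_k\equiv\psi_{k'}$ forces $k=k'$, so necessarily $\psi_k\equiv-\psi_{k'}\pmod{2\pi}$, i.e. $2\rho+2\pi(k+k')\equiv 0\pmod{2\pi n}$, which forces $\rho\in\pi\mathbb{Z}$ and hence $\rho=0$ or $\rho=\pi$ because $\rho\in[0,2\pi)$. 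Conversely, for $\rho=0$ the involution $k\leftrightarrow n-k$ and for $\rho=\pi$ the involution $k\leftrightarrow n-1-k$ pair up the indices into double eigenvalues, the only simple eigenvalues being those coming from the fixed points of the involution, where $\cos\psi_k=\pm1$.

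The step I expect to be the main obstacle is the final enumeration: extracting from each two-element orbit of the involution a single representative, discarding the fixed points (which give the simple eigenvalues $\pm2$ after the affine shift), and keeping track of the parity of $n$, so as to list the distinct double eigenvalues without repetition. This bookkeeping is exactly where the index ranges in conditions (1) and (2) originate, and it is the only place where one must argue with some care; the reduction to the circulant and the counting of cosine coincidences are otherwise routine once switching-invariance of the spectrum is taken for granted.
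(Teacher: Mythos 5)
The paper never proves this lemma: it is quoted from \cite{Li} as a known result, so there is no internal proof to compare against, and your proposal must stand on its own. The route you take --- using $\mathfrak{D}(C_n)=2I$ to write $A_\alpha(\Phi_{C_n})=2\alpha I+(1-\alpha)A(\Phi_{C_n})$, switching to constant edge gain $\mathrm{e}^{\mathrm{i}\rho/n}$, Fourier-diagonalising the resulting circulant to get the eigenvalues $2\alpha+2(1-\alpha)\cos\frac{\rho+2k\pi}{n}$ for $k=0,\dots,n-1$, then counting cosine coincidences --- is the natural direct argument, and the bound $\leq 2$ as well as the deduction $\rho\in\{0,\pi\}$ from $\psi_k\equiv-\psi_{k'}\pmod{2\pi}$ are correct. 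Two small repairs: the switching diagonal cannot be the data-independent $u_j=\mathrm{e}^{\mathrm{i}j\rho/n}$; it must be built recursively from the actual edge gains (what you really need is that any gain function on $C_n$ with cycle gain $\mathrm{e}^{\mathrm{i}\rho}$ is switching-equivalent to the constant one, which is standard). Also, after the affine shift the simple eigenvalues are $2\alpha\pm2(1-\alpha)$, not $\pm2$.

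The genuine gap is precisely the step you postpone as ``bookkeeping''. Carrying out the orbit count for $\rho=\pi$ (involution $k\leftrightarrow n-1-k$, with a fixed point only for odd $n$) gives double eigenvalues $2\alpha+2(1-\alpha)\cos\frac{(2j+1)\pi}{n}$ for $j=0,1,\dots,\lfloor n/2\rfloor-1$: the range starts at $j=0$, not at $j=1$ as in condition (2) of the statement. This is not cosmetic. For the negative triangle ($n=3$, $\rho=\pi$) the spectrum of $A(\Phi_{C_3})$ is $\{1,1,-2\}$, so $\lambda=2\alpha+(1-\alpha)$ has multiplicity $2$, while the list in (2) is empty ($j$ runs from $1$ to $\lfloor 3/2\rfloor-1=0$); for $n=4$ the double eigenvalue $2\alpha+\sqrt{2}\,(1-\alpha)$ is likewise missing from the list. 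So the lemma as transcribed in this paper is false in its ``only if'' direction, and no proof of it as stated can exist; your argument, completed honestly, proves the corrected statement with $j=0,1,\dots,\lfloor n/2\rfloor-1$ in (2). By deferring the enumeration as routine you missed exactly the point where your (correct) computation and the claimed index ranges part ways; a finished write-up must do the count and flag the off-by-one in (2). Condition (1) survives the same check: the fixed points $k=0$, and $k=n/2$ for even $n$, are excluded, giving $j=1,\dots,\lceil n/2\rceil-1$ as claimed.
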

According to Theorem \ref{thm3} and Lemma \ref{Li}, we may obtain a result in \cite{Li}, which is also a generalization of some results in \cite{Ma, Wang1, Xu}.

\begin{cor}[\cite{Li}]
Let $\Phi_G$ be a $\mathbb{T}$-gain graph whose underlying graph $G$ is of order at least $2$. Then
\begin{equation*}
m_{A_\alpha(\Phi_G)}(G, \lambda) \leqslant 2 \theta(G)+p(G),
\end{equation*}
for all $\lambda \in \mathbb{R}$ and $\alpha \in[0,1)$. Furthermore, the equality holds if and only if $G$ is a cycle $C_n$ and one of the following conditions holds:
\begin{enumerate}[(1)]
\vspace{-0.2cm}
\item $\varphi\left(C_n\right)=1$ and $\lambda=2 \alpha+2(1-\alpha) \cos \frac{2 j \pi}{n}, j=1,2, \ldots,\left\lceil\frac{n}{2}\right\rceil-1$;
\vspace{-0.2cm}
\item $\varphi\left(C_n\right)=-1$ and $\lambda=2 \alpha+2(1-\alpha) \cos \frac{(2 j+1) \pi}{n}, j=1,2, \ldots,\left\lfloor\frac{n}{2}\right\rfloor-1$.
\vspace{-0.2cm}
\end{enumerate}
\end{cor}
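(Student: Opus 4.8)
The plan is to observe that $A_\alpha(\Phi_G)$ is itself a member of $\mathcal{S}(G)$, so that the corollary drops out of Theorem \ref{thm3} together with the cycle analysis of Lemma \ref{Li}. The only genuine content is verifying this membership; once it is in hand, no new argument is required.

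First I would check the three defining properties of $\mathcal{S}(G)$ for the matrix $A_\alpha(\Phi_G)=\alpha\mathfrak{D}(G)+(1-\alpha)A(\Phi_G)$. It is Hermitian because $\mathfrak{D}(G)$ is a real diagonal matrix and $A(\Phi_G)$ is Hermitian (recall $a_{uv}^{\varphi}=\varphi(e_{uv})=\overline{\varphi(e_{vu})}=\overline{a_{vu}^{\varphi}}$). Its diagonal entries are $\alpha\, d_G(v)$, which are real and otherwise unconstrained, exactly matching the freedom allowed on the diagonal in $\mathcal{S}(G)$. The decisive point is the off-diagonal pattern: for $u\neq v$ the entry equals $(1-\alpha)\varphi(e_{uv})$, and since $\alpha\in[0,1)$ forces $1-\alpha>0$ while $|\varphi(e_{uv})|=1$, this entry is nonzero precisely when $u\sim v$. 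This is exactly where the hypothesis $\alpha\neq 1$ is needed: at $\alpha=1$ the off-diagonal part vanishes and the matrix is no longer the matrix of $G$. Hence $A_\alpha(\Phi_G)\in\mathcal{S}(G)$.

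With membership established, the bound $m_{A_\alpha(\Phi_G)}(G,\lambda)\le 2\theta(G)+p(G)$ is an immediate instance of Theorem \ref{thm3}, and the same theorem tells us equality can hold only if $G\cong C_n$ and the multiplicity equals $2$. For a cycle one has $\theta(C_n)=1$ and $p(C_n)=0$, so $2\theta(C_n)+p(C_n)=2$; thus attaining the bound on a cycle is precisely the assertion $m_{A_\alpha(\Phi_{C_n})}(C_n,\lambda)=2$.

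Finally I would invoke Lemma \ref{Li}, which classifies exactly when a gain cycle $\Phi_{C_n}$ carries an eigenvalue of $A_\alpha$ with multiplicity $2$: writing $\varphi(C_n)=\mathrm{e}^{\mathrm{i}\rho}$, either $\rho=0$ and $\lambda=2\alpha+2(1-\alpha)\cos\frac{2j\pi}{n}$, or $\rho=\pi$ and $\lambda=2\alpha+2(1-\alpha)\cos\frac{(2j+1)\pi}{n}$, over the stated ranges of $j$. Translating $\rho=0$ and $\rho=\pi$ into $\varphi(C_n)=1$ and $\varphi(C_n)=-1$ yields the two listed conditions and completes the characterization. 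There is no substantive obstacle in this argument; the single point demanding care is the off-diagonal nonvanishing check, which is precisely what forces the restriction $\alpha\in[0,1)$ rather than $\alpha\in[0,1]$.
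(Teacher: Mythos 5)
Your proposal is correct and follows exactly the paper's route: the paper derives this corollary in one line by combining Theorem \ref{thm3} with Lemma \ref{Li}, which is precisely your argument. The only difference is that you explicitly verify $A_\alpha(\Phi_G)\in\mathcal{S}(G)$ (Hermitian, free diagonal, off-diagonal entries $(1-\alpha)\varphi(e_{uv})\neq 0$ exactly on edges since $\alpha<1$), a membership check the paper leaves implicit, and which is indeed the only point requiring care.
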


\section{Characterization of graphs with $m_B(G,\lambda)=2\theta+p-1$}\label{s-4}

We denote $\mathcal{U}$ by the set of unicyclic graphs having at most one pendant vertex. Next, we will characterize all graphs with $m_B(G, \lambda)=2\theta(G)+p(G)-1$.

For convenience, a graph $G$ is said to be $2^+$-\textit{deficient} if $m_B(G, \lambda)\leq2\theta(G)+p(G)-2$, and it is said to be $1^+$-\textit{deficient} if $m_B(G, \lambda)=2\theta(G)+p(G)-1$.

\begin{lem}\label{thm-4}
Let $p\ge 2$ be an integer and $G$ be a unicyclic graph with $p$ pendent paths. If $\lambda$ is an eigenvalue of $B(G)\in \mathcal{S}(G)$,
then $m_B(G, \lambda)= p+1$ if and only if there is only one major vertex of degree 3 in the cycle of $G$, and $G-\mathcal{M}(G)\cong C^*\cup_i P^i~(i\geq1)$, where $C^*\in \mathcal{U}$ with $m_B(C^*, \lambda)=2$ and any two major vertices of $\mathcal{M}(G)$ are not adjacent, and $P^i$ is a path with $\lambda\in \sigma(B(P^i))$ for $i\geq1$.
\end{lem}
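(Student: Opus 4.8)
The plan is to prove the two implications separately, in each case comparing $m_B(G,\lambda)$ against the global bound $p+1$. Note first that since $G$ has $p\ge2$ pendant paths it is not a cycle, so Theorem \ref{thm3} already gives $m_B(G,\lambda)\le 2\theta(G)+p(G)-1=p+1$, where I use that each pendant path carries exactly one pendant vertex, so the number of pendant paths equals $p(G)$. For the sufficiency direction I would produce the matching lower bound by deleting the off-cycle major vertices. Writing $\mathcal{M}(G)=\{x_1,\dots,x_t\}$ and applying Lemma \ref{lem-2-11} once per vertex yields $m_B(G,\lambda)\ge m_B(G-\mathcal{M}(G),\lambda)-t$. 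By hypothesis $G-\mathcal{M}(G)=C^*\cup P^1\cup\cdots\cup P^l$, so block-diagonality gives $m_B(G-\mathcal{M}(G),\lambda)=m_B(C^*,\lambda)+\sum_i m_B(P^i,\lambda)=2+l$, since $m_B(C^*,\lambda)=2$ is assumed and each $m_B(P^i,\lambda)=1$ because $\lambda\in\sigma(B(P^i))$ and a path has simple spectrum by Lemma \ref{lem-2-2}. It then remains to count $l$: as the $x_j$ are pairwise non-adjacent and $G$ is unicyclic, an Euler-characteristic count gives $\omega(G-\mathcal{M}(G))=\sum_j d(x_j)-t+1$, hence $l=\sum_j (d(x_j)-1)$, while the handshake identity $\sum_v(d(v)-2)=0$ together with the single cycle-major $w$ of degree $3$ gives $\sum_j(d(x_j)-2)=p-1$. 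Thus $l=p-1+t$ and $m_B(G,\lambda)\ge 2+l-t=p+1$, forcing equality.

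For the necessity direction I would argue by induction on $p$, the key engine being Lemma \ref{lem-2-1}: whenever a pendant path $P^i:v_1\cdots v_k$ attached at a major vertex $z$ satisfies $\lambda\in\sigma(B(P^i))$, Lemma \ref{lem-2-2} guarantees $\lambda\notin\sigma(B(P_{k-1}))$, so Lemma \ref{lem-2-1} applies and gives $m_B(G,\lambda)=m_B(G-P^i-z,\lambda)$; that is, an eigen-pendant-path may be peeled off together with its attachment vertex. First, mirroring the necessity part of Theorem \ref{thm-2} via Lemma \ref{lem-removePath}: each $G-P^i$ is again unicyclic, not a cycle, with $p-1$ pendant paths, so $m_B(G-P^i,\lambda)=p$ and $G-P^i$ is itself extremal. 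Feeding this into the induction (with Theorem \ref{thm3} playing the role that inequality (\ref{equ-tree}) played in the tree case), I would deduce that every pendant path of $G$ is an eigen-path, that no two off-cycle majors are adjacent, and that the internal paths joining off-cycle majors carry $\lambda$ in their spectrum. These steps are formally identical to the tree argument.

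The genuinely new work, and the main obstacle, is pinning down the cycle: showing the cycle carries exactly one major, of degree $3$, and that the residual component satisfies $m_B(C^*,\lambda)=2$. The mechanism I would exploit is that peeling an eigen-pendant-path whose attachment vertex lies \emph{on} the cycle deletes that cycle vertex and therefore opens the cycle into a path, turning the relevant part of $G$ into a forest, after which Theorem \ref{thm-2} applies and one only has to count leaves. Concretely, after the off-cycle structure has been reduced one reaches the cycle-core; if the cycle had total excess degree $\sum_{\text{cycle majors}}(d-2)\ge2$ (two majors, or a single major of degree $\ge4$), opening it would produce a forest with strictly more than $p+1$ leaves, so Theorem \ref{thm-2} would force $m_B(G,\lambda)\le p$, contradicting extremality. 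This is exactly the phenomenon in the base case $p=2$: of the three possible cycle-configurations (one degree-$4$ cycle major, two degree-$3$ cycle majors, or one degree-$3$ cycle major feeding an off-cycle branch), only the last survives this leaf count, and for it Lemma \ref{lem-2-1} reduces $m_B(G,\lambda)$ to $1+m_B(C^*,\lambda)$, so equality forces $m_B(C^*,\lambda)=2$. Hence the cycle has a unique degree-$3$ major $w$, whence $C^*\in\mathcal{U}$, and tracking the peeling identities back shows the surviving cycle component contributes exactly $2$. The delicate point throughout will be the bookkeeping of how the number of pendant vertices changes under the two deletion operations, removing a whole pendant path versus peeling a path together with a cycle-major and thereby breaking the cycle, since it is precisely this count that separates the extremal configuration from every other.
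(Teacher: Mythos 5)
Your sufficiency argument and the first phase of your necessity argument coincide with the paper's proof (the paper deletes $\mathcal{M}(G)$ and combines Lemma \ref{lem-2-11} with Theorem \ref{thm3} exactly as you do, and your degree-count derivation of $l=p+t-1$ just makes explicit what the paper calls obvious). The trouble is in the step you yourself flag as the genuinely new work, and the first problem is that your key inequality points the wrong way. Theorem \ref{thm-2} is an \emph{upper} bound of the form $m_B\le(\text{number of leaves})-(\text{number of components})$, so a forest with \emph{more} than $p+1$ leaves gives a \emph{weaker} bound; it cannot ``force $m_B(G,\lambda)\le p$.'' To get a contradiction you need an upper bound on the leaf count of the opened graph: if a pendant path $P^i$ is attached at a cycle vertex $z$ with $d(z)=3+b$, then $G-P^i-z$ is a forest with $1+b$ components and at most $(p-1)+2+b$ leaves, whence $m_B(G-P^i-z,\lambda)\le p$. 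Run correctly, this argument rules out \emph{every} cycle-attached pendant path, irrespective of your dichotomy on the total excess degree of the cycle, which is not the statement you wrote.

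The second and more serious problem is that the peeling step is circular precisely where you need it. Lemma \ref{lem-2-1} can only be applied to a pendant path $P^i$ for which $\lambda\in\sigma(B(P^i))$ is already known. Your phase-one induction supplies this for pendant paths hanging from vertices of $\mathcal{M}(G)$, but for a pendant path attached to the cycle the induction hypothesis applied to $G-P^j$ yields only $m_B(C^*,\lambda)=2$ for the unicyclic component $C^*$ containing that path, and for general Hermitian $B$ this does \emph{not} imply that the attached path carries $\lambda$ --- that is exactly the content of the paper's Remark 1 (the matrices $B(H_1)$, $B(H_2)$ of Fig.\,\ref{fig-3}), and it is why Lemma \ref{lem-4-1} is stated for the adjacency matrix only. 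Worse, in the base case $p=2$ there is no induction hypothesis at all, so neither the eigen-path property nor your reduction $m_B(G,\lambda)=1+m_B(C^*,\lambda)$ for the surviving configuration is available, and your leaf count cannot even begin; note also that for $p\ge3$ the induction hypothesis applied to the graphs $G-P^j$ already excludes two cycle majors, a degree-$\ge4$ cycle major, and cycle-attached pendant paths by purely combinatorial comparison of cycle majors, so the only place your new mechanism is actually needed is the base case, where it cannot run. The paper closes this hole with an eigenvector-support argument: since $m_B(G,\lambda)=3$, one chooses an eigenvector vanishing at two prescribed vertices (a leaf together with a degree-$2$ cycle vertex to kill $\mathcal{M}(G)=\varnothing$, or two adjacent degree-$2$ cycle vertices to extract the eigen-path property of $P^1$ and $P^2$) and propagates zeros through the eigenvalue equations. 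Some argument of this kind is indispensable, and nothing in your sketch replaces it.
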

\begin{proof}
We will prove by induction on $P(G)=p$.
If $p=2$, then $\mathcal{M}(G)\leq1$.
Suppose the two pendant paths of $G$ are $P^1:u_1u_2\cdots u_{t_1}$, $P^2:v_1v_2\cdots v_{t_2}$.
If $\mathcal{M}(G)=0$, then the major vertices adjacent to $P^1$ and $P^2$ are on the cycle, say $x_1\sim u_{t_1}$ and $x_2\sim v_{t_2}$(possibly, $x_1=x_2$), respectively.
Let $x_3$ be the cycle-vertex adjacent to $x_2$ with $d(x_3)=2$. Since $m_B(G, \lambda)=3$, we can construct an eigenvector $Y\neq \textbf{0}$ of $B(G)$ corresponding to $\lambda$ with the components $Y(v_1)=Y(x_3)=0$.
From the eigen-equation $B(T)Y=\lambda Y$, we can show that $Y=\textbf{0}$, a contradiction.
Therefore, $\mathcal{M}(G)=1$.

Let $\mathcal{M}(G)=\{u\}$, and $u\sim u_{t_1}$, $u\sim v_{t_2}$.
Note that $m_B(G, \lambda)=3$.
We can construct an eigenvector $Y'\neq \textbf{0}$ of $B(G)$ corresponding to $\lambda$ with the component $Y'(w)=Y'(z)=0$, where $w$ and $z$ are cycle-vertices of degree $2$ and $w\sim z$.
Let $Y'_1$ and $Y'_2$ be the sub-vector of $Y$ corresponding to $P^1$ and $P^2$, respectively.
Note that $Y'\neq \textbf{0}$.
From the eigen-equation $B(T)Y'=\lambda Y'$, we have $Y'_i\not=\textbf{0}$ for $i=1,2$.
Thus, $Y'_i$ is a solution of $B(P^i)Y'_i=\lambda Y'_i~(i=1,~2)$,
which implies that $\lambda\in \sigma(B(P^1))$ and  $\lambda\in \sigma(B(P^2))$.

For the necessity part, if not, then by Lemmas \ref{lem-2-1}, $m_B(G, \lambda)=m_B(G-P^1-u, \lambda)=m_B(G-P^1-P^2-u, \lambda)+m_B(P^2, \lambda)\leq 2$, where $G-P^1-P^2-u$ is a unicyclic graph with at most one pendant vertex, a contradiction.
For the sufficiency part, if $\lambda\in \sigma(B(P^i))~(i=1,2)$ and $m_B(G-P^1-P^2-u, \lambda)=2$, then by Lemmas \ref{lem-2-1}, $m_B(G, \lambda)=m_B(G-P^1-u, \lambda)=m_B(G-P^1-P^2-u, \lambda)+m_B(P^2, \lambda)= 2+1=3$, as required.
Then the result holds for $p=2$.

Next, assume the result holds for all graphs with $2\leq p\leq k-1~(k\geq3)$.
Let $G$ be an unicyclic graph with $k$ pendent paths $P^i: u^i_1u^i_2\cdots u^i_{t_i}(1\leq i\leq k)$.  For the necessity part, we first claim if $m_B(G, \lambda)=k+1$, then for each pendant path $P^i(1\leq i\leq k)$ of $G$, $\lambda\in \sigma(B(P^i))$ and there is only one major vertex in the cycle of $G$. By Lemma \ref{lem-removePath}, we have
\begin{equation*}
k+1=m_B(G, \lambda)\leq m_B(G-P^i, \lambda)+1\leq p(G-P^i)+1+1=k+1,
\end{equation*}
which means $m_B(G-P^i, \lambda)=k=p(G-P^i)+1$ for any $1\leq i\leq k$.  Since $p(G-P^i)=k-1$, by induction hypothesis, every graph $G-P^i~(1\leq i\leq  k, k\geq3)$  satisfies the condition for the equation hold. Thus, the claim holds.

If there exists a major vertex, say $x_1\in \mathcal{M}(G)$,
with $d(x_1)\geq 4$ and $P^1$ is adjacent to $x_1$, then $\mathcal{M}(G-P^1)=\mathcal{M}(G)$. By induction hypothesis,  each paths $P$ of $G-P^1-\mathcal{M}(G-P^1)$  with $\lambda\in \sigma(B(P))$, as required.  Otherwise, any major vertex of $\mathcal{M}(G)$ adjacent to pendant paths of $G$ with degree $3$. Suppose $x_1$ is a major vertex adjacent  two pendant paths $P^1$ and $P^2$ and with  internal path $P^{12}$ between $x_1$ and $x_2$ (see Fig.\,\ref{fig-1}).
According to above analysis, each path $P$ of $G-P^1-\mathcal{M}(G-P^1)$ with $\lambda\in \sigma(B(P))$, where $\mathcal{M}(G-P^1)=\mathcal{M}(G)/\{x_1\}$.
Thus, $\lambda\in \sigma(B(P_2\cup P^{12}/\{x_2\}))$.
Note that $\lambda\in \sigma(B(P^2))$.
According to Lemmas \ref{lem-2-1} and \ref{lem-2-2}, we have $\lambda\in \sigma(B(P^{12}/\{x_1, x_2\}))$, which also means $x_1\nsim x_2$, as required.

For the sufficiency part, Note that  any two major vertices of $\mathcal{M}(G)$ are not adjacent and $G-\mathcal{M}(G)\cong C^*\cup_i P^i$, where  $C^*\in\mathcal{U}$ with $m_B(C^*, \lambda)=2$.
Suppose there are $l$ paths of $G-\mathcal{M}(G)$ and $|\mathcal{M}(G)|=t$.
Obviously, $l=p+t-1$, and by Lemma \ref{lem-2-11}, we have
\begin{align*}
 m_B(G, \lambda)&\geq m_B(G-\mathcal{M}(G))-t\\
 &=2+l-t\\
 &=p+1.
\end{align*}
Combining with Theorem \ref{thm3}, we have $m_B(G, \lambda)=p+1$.
\end{proof}

\vskip0.3in
\begin{figure}[htbp]
\centering
\begin{tikzpicture}[x=1.00mm, y=1.00mm, inner xsep=0pt, inner ysep=0pt, outer xsep=0pt, outer ysep=0pt,scale=0.7]
\path[line width=0mm] (38.03,37.72) rectangle +(125.62,39.78);
\definecolor{L}{rgb}{0,0,0}
\definecolor{F}{rgb}{0,0,0}
\path[line width=0.30mm, draw=L, fill=F] (51.37,70.49) circle (1.00mm);
\path[line width=0.30mm, draw=L, fill=F] (51.37,50.51) circle (1.00mm);
\path[line width=0.30mm, draw=L, fill=F] (41.03,60.85) circle (1.00mm);
\path[line width=0.30mm, draw=L, fill=F] (60.54,60.15) circle (1.00mm);
\path[line width=0.30mm, draw=L] (51.37,70.72) -- (60.30,60.62);
\path[line width=0.30mm, draw=L] (51.61,70.72) -- (41.50,61.56);
\path[line width=0.30mm, draw=L] (60.77,60.38) -- (51.37,50.04);
\path[line width=0.30mm, draw=L, fill=F] (120.70,60.38) circle (1.00mm);
\path[line width=0.30mm, draw=L, fill=F] (130.33,70.49) circle (1.00mm);
\path[line width=0.30mm, draw=L, fill=F] (130.57,60.15) circle (1.00mm);
\path[line width=0.30mm, draw=L, fill=F] (130.80,49.80) circle (1.00mm);
\path[line width=0.30mm, draw=L, fill=F] (151.25,70.49) circle (1.00mm);
\path[line width=0.30mm, draw=L, fill=F] (151.02,60.62) circle (1.00mm);
\path[line width=0.30mm, draw=L, fill=F] (150.31,50.04) circle (1.00mm);
\path[line width=0.30mm, draw=L, fill=F] (160.65,60.38) circle (1.00mm);
\path[line width=0.30mm, draw=L] (120.46,60.38) -- (130.57,70.49);
\path[line width=0.30mm, draw=L] (120.93,60.15) -- (131.04,60.15);
\path[line width=0.30mm, draw=L] (120.93,60.38) -- (130.80,49.33);
\path[line width=0.30mm, draw=L] (151.49,70.72) -- (160.65,60.38);
\path[line width=0.30mm, draw=L] (160.89,60.85) -- (150.55,49.80);
\path[line width=0.30mm, draw=L] (151.02,60.62) -- (161.12,60.62);
\path[line width=0.45mm, draw=L, dash pattern=on 0.45mm off 0.50mm] (40.79,60.85) -- (52.08,49.80);
\path[line width=0.45mm, draw=L, dash pattern=on 0.45mm off 0.50mm] (130.10,70.25) -- (151.25,70.25);
\path[line width=0.45mm, draw=L, dash pattern=on 0.45mm off 0.50mm] (130.57,60.15) -- (151.49,60.15);
\path[line width=0.45mm, draw=L, dash pattern=on 0.45mm off 0.50mm] (130.80,49.80) -- (150.55,50.04);
\draw(45.02,59.21) node[anchor=base west]{\fontsize{11.38}{13.66}\selectfont $C_p$};
\draw(60.86,40.58) node[anchor=base west]{\fontsize{11.38}{13.66}\selectfont $\infty$-graph};
\draw(132.68,72.37) node[anchor=base west]{\fontsize{11.38}{13.66}\selectfont $P_{p+1}$};
\draw(132.92,62.50) node[anchor=base west]{\fontsize{11.38}{13.66}\selectfont $P_{q+1}$};
\draw(132.92,51.92) node[anchor=base west]{\fontsize{11.38}{13.66}\selectfont $P_{l+1}$};
\draw(133.86,40.87) node[anchor=base west]{\fontsize{11.38}{13.66}\selectfont $\theta$-graph};
\path[line width=0.30mm, draw=L, fill=F] (89.94,69.58) circle (1.00mm);
\path[line width=0.30mm, draw=L, fill=F] (89.94,49.60) circle (1.00mm);
\path[line width=0.30mm, draw=L, fill=F] (79.60,59.95) circle (1.00mm);
\path[line width=0.30mm, draw=L, fill=F] (99.10,59.24) circle (1.00mm);
\path[line width=0.30mm, draw=L] (89.94,69.82) -- (98.87,59.71);
\path[line width=0.30mm, draw=L] (90.17,69.82) -- (80.07,60.65);
\path[line width=0.30mm, draw=L] (99.34,59.48) -- (89.94,49.13);
\path[line width=0.45mm, draw=L, dash pattern=on 0.45mm off 0.50mm] (79.36,59.95) -- (90.64,48.90);
\draw(83.59,58.30) node[anchor=base west]{\fontsize{11.38}{13.66}\selectfont $C_q$};
\path[line width=0.30mm, draw=L, fill=F] (66.08,59.93) circle (1.00mm);
\path[line width=0.30mm, draw=L, fill=F] (74.95,60.11) circle (1.00mm);
\path[line width=0.30mm, draw=L] (60.83,60.11) -- (66.80,59.93);
\path[line width=0.30mm, draw=L] (75.13,60.29) -- (80.20,59.93);
\path[line width=0.30mm, draw=L, dash pattern=on 0.30mm off 0.50mm] (66.08,59.93) -- (75.13,59.93);
\draw(65.72,62.65) node[anchor=base west]{\fontsize{12.23}{17.07}\selectfont $l-1$};
\end{tikzpicture}%
\caption{$\infty(p, q, l)$-graph and $\theta(p, q, l)$-graph}\label{fig-2}
\end{figure}
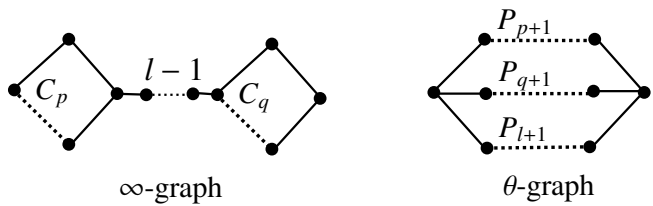
\vskip0.3in

\begin{lem}\label{lem-4-2}
Let $G$ be an $1^+$-deficient connected graph except graphs in $\mathcal{U}$ with $B$ as a block of order at least $3$. If $G$ is neither a $\theta$-graph nor an $\infty$-graph, then $B$ is a pendant cycle of $G$ and the major vertex of $B$ has degree $3$.
\end{lem}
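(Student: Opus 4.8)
The plan is to derive a contradiction from any shape of the block $B$ other than the claimed one, by showing that a bad $B$ would force $G$ to be $2^{+}$-deficient. Since a block of order at least $3$ is $2$-connected and hence contains a cycle, the whole argument runs on a single controlled vertex deletion inside $B$. For $v\in V(B)$, Lemma \ref{lem-2-11} gives $m_B(G,\lambda)\le m_B(G-v,\lambda)+1$, while Theorem \ref{thm3}, applied to each component of $G-v$, gives $m_B(G-v,\lambda)=2\theta(G-v)+p(G-v)-d(G-v)$, where I write $d(G-v)\ge 0$ for the total deficiency of $G-v$ (the sum of $2\theta+p-m_B$ over its components). Setting $\Delta\theta=\theta(G)-\theta(G-v)$ and $\Delta p=p(G)-p(G-v)$, these combine into
\begin{equation*}
m_B(G,\lambda)\le 2\theta(G)+p(G)+1-\bigl(2\Delta\theta+\Delta p+d(G-v)\bigr),
\end{equation*}
so $G$ is $2^{+}$-deficient as soon as $2\Delta\theta+\Delta p+d(G-v)\ge 3$. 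The task then reduces to producing, for each forbidden configuration of $B$, a vertex $v$ realizing this inequality, the term $d(G-v)$ being certified by Lemma \ref{thm-4} or by the induction.

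I would induct on $\theta(G)$. In the base case $\theta(G)=1$ the graph $G$ is unicyclic; since $G\notin\mathcal{U}$ we have $p(G)\ge 2$, and $1^{+}$-deficiency reads $m_B(G,\lambda)=p(G)+1$, so Lemma \ref{thm-4} applies verbatim and tells us the unique cycle carries exactly one major vertex, of degree $3$ --- precisely the assertion that $B$ is a pendant cycle with a degree-$3$ major vertex. Assume now $\theta(G)=k\ge 2$ and that the statement, together with Lemma \ref{thm-4}, is known for all admissible graphs of smaller cyclomatic number. For the principal claim, that $B$ is a cycle, suppose $\theta(B)\ge 2$ and choose a vertex $v$ of degree at least $3$ in $B$ (such $v$ exists since $B$ is $2$-connected and not a cycle), so that $\Delta\theta\ge 2$. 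Removing $v$ turns the paths of $B$ through $v$ into pendant paths, and the point is that the cycle space of $B$ survives in $G-v$ as cyclic structure now carrying \emph{several} attaching major vertices; by the induction hypothesis (or by Lemma \ref{thm-4} when the residue is unicyclic) $G-v$ is then $2^{+}$-deficient, i.e. $d(G-v)\ge 2$, and together with $\Delta\theta\ge 2$ this gives $2\Delta\theta+\Delta p+d(G-v)\ge 3$ after bounding the $\Delta p$ loss from the new pendant paths --- a contradiction. The model instance is the subdivided $K_4$: deleting one branch vertex leaves a unicyclic graph whose cycle has three major vertices and which is $2^{+}$-deficient by Lemma \ref{thm-4}. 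Hence $B$ is a cycle.

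Once $B$ is a cycle, the remaining two claims should follow by the same scheme. If $B$ carried two or more major vertices, a suitable deletion opening the cycle between two of them leaves the rest of the cyclic structure intact but over-attached, so the residue is $2^{+}$-deficient and $2\Delta\theta+\Delta p+d(G-v)\ge 3$ again; thus $B$ is pendant. For the unique major vertex $x$ of the pendant cycle, if $d_G(x)\ge 4$ then, besides the cycle, $x$ anchors at least two further branches, and I would combine Lemma \ref{lem-2-1} with Lemma \ref{lem-2-2}, in the manner of the eigenvector construction in the proof of Lemma \ref{thm-4}, to manufacture the extra unit of deficiency and force $d_G(x)=3$.

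The decisive difficulty lies in ruling out the non-pendant-cycle blocks. A naive count is useless: for a degree-$3$ branch vertex with all neighbours of degree $2$ one has exactly $2\Delta\theta+\Delta p=2$, landing on the $1^{+}$ bound rather than past it, so the missing unit of deficiency must come entirely from the \emph{internal} structure of $G-v$, recognized as $2^{+}$-deficient only through Lemma \ref{thm-4} and the induction. Making this rigorous requires two things that are the real work of the proof: showing that every $2$-connected block with $\theta\ge 2$ which is not a $\theta$-graph, and likewise every internal cycle whose major vertices may be far apart along long arcs, actually admits a deletion exposing a multi-attachment residue; and then tracking $\Delta\theta$, $\Delta p$, and the per-component deficiencies across the possibly several pieces of $G-v$, including the case where the chosen vertex is a cut vertex of $G$ and $G-v$ falls apart. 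This bookkeeping, rather than any single inequality, is where I expect the argument to be hardest.
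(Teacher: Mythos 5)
Your setup is sound as far as it goes --- the inequality $m_B(G,\lambda)\le 2\theta(G)+p(G)+1-\bigl(2\Delta\theta+\Delta p+d(G-v)\bigr)$, the induction on $\theta(G)$, and the base case via Lemma \ref{thm-4} all match the paper's skeleton --- but the core of your inductive step fails, and you have essentially conceded this in your final paragraph. The fatal choice is deleting a vertex $v$ of degree at least $3$ in $B$. Your claim that ``the cycle space of $B$ survives in $G-v$'' is false in precisely the case that most needs to be ruled out: take $B$ to be a $\theta$-block, i.e.\ $G$ is a $\theta$-graph with at least one tree attached (so $G$ is not itself a $\theta$-graph and the lemma must cover it). Deleting either branch vertex of $B$ destroys \emph{both} independent cycles and leaves a tree. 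Then $\Delta\theta=2$, $\Delta p\ge-3$ (the three neighbours of $v$ typically become pendant), and for a tree the deficiency is only guaranteed to satisfy $d(G-v)\ge 1$, since a tree may perfectly well attain $m_B=p-1$ (Theorem \ref{thm-2} characterizes exactly when it does). Your criterion $2\Delta\theta+\Delta p+d(G-v)\ge 3$ would need $d(G-v)\ge 2$, which nothing forces; the count lands exactly on $2\theta(G)+p(G)-1$ and no contradiction results. The same collapse occurs whenever all cycles of $B$ pass through the deleted vertex. Your ``model instance,'' the subdivided $K_4$, is exactly the favourable case where the residue stays cyclic and multi-attached; it is not representative, and the hard case is the one where your scheme stalls.

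The paper avoids this trap by never deleting a major vertex. It defines $\mathcal{B}$ as the set of blocks having at least two major vertices of $G$ or exactly one major vertex of degree at least $4$ (this single definition encodes all failure modes of the conclusion), fixes $B^*\in\mathcal{B}$ with the largest number of major vertices, and in the induction on $\theta(G)$ deletes either a degree-$2$ vertex adjacent to a major vertex (Lemma \ref{lem-2-11}, cost $+1$) or, when a cycle consists entirely of major vertices, an edge of that cycle (Lemma \ref{lem-2-12}, cost $+2$ but with $p$ unchanged). These operations lower $\theta$ by exactly $1$ while keeping the bad block $B^*$, or a bad sub-block of it, intact inside the residue, so the induction hypothesis genuinely applies there; in the one subcase where no bad block need survive (a degree-$2$ vertex adjacent to \emph{two} major vertices, so that $p$ does not grow) the paper instead invokes Theorem \ref{thm3} directly, which supplies enough slack on its own. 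That invariant --- the bad structure must survive the deletion --- is the idea missing from your proposal, and without it the ``bookkeeping'' you defer cannot be completed. Your treatment of the remaining claims (that the cycle is pendant and its major vertex has degree $3$) is likewise only a sketch; in the paper both are subsumed in the same induction through the definition of $\mathcal{B}$, with no separate eigenvector argument needed.
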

\begin{proof}
Assume $\mathcal{B}$ is the set of blocks of $G$ in which each block $B$ has at least two major vertices or exactly one major vertex with degree at least $4$ of $G$.
To prove this theorem, it suffices to show that $\mathcal{B}=\varnothing$.
On the contrary, choose $B^*$ as the block of $G$ with the largest number of major vertices in $\mathcal{B}$. In the following, we proceed by induction on $\theta(G)$ to prove $G$ is $2^+$-deficient.
If $\theta(G)=1$, by Lemma \ref{thm-4}, then $G$ is $2^+$-deficient since $B^*\in \mathcal{B}$.
Suppose all graphs with a block $B\in \mathcal{B}$ and cyclomatic number no more than $t-1~(t\geq2)$ are $2^+$-deficient graphs. Let $G$ be a graph with $\theta(G)=t$.
{\flushleft  \bf Case 1.} If there is a cycle  $C'$ of $G$ with $C'\nsubseteq B^*$.
{\flushleft  \bf Subcase 1.} All vertices of $C'$ are major vertices.

In this situation, we can choose an edge on $C'$, say $e^*$. Clearly, $\theta(G-e^*)=\theta(G)-1=t-1$ and $p(G-e^*)=p(G)$.
Since $G$ is not an $\infty$-graph and $B^*$ is the block of $G$ with the largest number of major vertices in $\mathcal{B}$, we have
 $B^*\subseteq G-e^*$ and $G-e^*\notin\mathcal{U}$.
By Lemma \ref{lem-2-12} and the induction hypothesis, we have
\begin{align*}
m_B(G, \lambda)&\leq m_B(G-e^*, \lambda)+2\\
&\leq2\theta(G-e^*)+p(G-e^*)-2+2\\
&=2(\theta(G)-1)+p(G)\\
&=2\theta(G)+p(G)-2.
\end{align*}
{\flushleft \bf Subcase 2.} There exists a vertex with degree $2$ of $C'$.

In this situation, we select a vertex $v^*$ with degree $2$ that is adjacent to at least one major vertex of $C'$.
It is evident that $\theta(G-v^*)=\theta(G)-1=t-1$ and $p(G-v^*)\leq p(G)+1$.
Note that $B^*\subseteq G-v^*$ and $B^*\in \mathcal{B}$.
Since $G$ is not an $\infty$-graph and $B^*$ is the block of $G$ with the largest number of major vertices in $\mathcal{B}$, we can conclude that $G-v^*\notin\mathcal{U}$. By Lemma \ref{lem-2-11} and the induction hypothesis,
\begin{equation*}
\begin{array}{lll}
m_B(G, \lambda)&\leq& m_B(G-v^*, \lambda)+1\\
&\leq&2\theta(G-v^*)+p(G-v^*)-2+1\\
&\le&2(\theta(G)-1)+(p(G)+1)-1\\
&=&2\theta(G)+p(G)-2.
\end{array}
\end{equation*}

{\flushleft  \bf Case 2.} There is only one block $B^*$ with order at least $3$ of $G$.

Note that $\theta(G)\geq2$ and $G$ is not a $\theta$-graph. Then $\theta(B^*)\geq 2$ and there at least three major vertex of $G$.
If all vertices of $B^*$ are major vertices, as similar analysis as Subcase $1$, $G$ is $2^+$-deficient.
Otherwise, there exists a vertex with degree $2$ of $B^*$, say $v^*$,  adjacent to at least one major vertices of $B^*$.
Since $G$ is not a $\theta$-graph, we have $G-v^*$ is not isomorphic to $C_{n-1}$.

If $v^*$ is adjacent to two major vertices of $B^*$, then
$\theta(G-v^*)=\theta(G)-1=t-1$ and $p(G-v^*)=p(G)$.
By Lemma \ref{lem-2-11} and Theorem \ref{thm3},
\begin{align*}
m_B(G, \lambda)&\leq m_B(G-v^*, \lambda)+1\\
&\leq2\theta(G-v^*)+p(G-v^*)-1+1\\
&=2(\theta(G)-1)+p(G)\\
&=2\theta(G)+p(G)-2.
\end{align*}
If $v^*$ is adjacent to exactly one major vertices of $B^*$, then
$\theta(G-v^*)=\theta(G)-1=t-1$, $p(G-v^*)=p(G)+1$ and $B^*-v^*$ has a block containing at least two major vertices or exactly one major vertex with degree at least $4$ of $G$.
By Lemma \ref{lem-2-11} and the induction hypothesis, we have
\begin{align*}
m_B(G, \lambda)&\leq m_B(G-v^*, \lambda)+1\\
&\leq2\theta(G-v^*)+p(G-v^*)-2+1\\
&=2(\theta(G)-1)+p(G)\\
&=2\theta(G)+p(G)-2.
\end{align*}

Therefore, the result follows from the principle of induction.
\end{proof}


\begin{lem}\label{lem-4-4}
Let $G$ be either a $\theta$-graph or an $\infty$-graph, and let $y$ be the vertex of degree $2$ adjacent to a major vertex of $G$. If $G$ is $1^+$-deficient, then $$m_B(G-y, \lambda)=m_B(G, \lambda)-1=2.$$
Moreover, if $G$ is a $\theta$-graph, then $m_B(G-x, \lambda)=2$ for any $x\in \mathcal{X}(G)$.
\end{lem}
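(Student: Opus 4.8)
The plan is to trap $m_B(G-y,\lambda)$ between a lower bound from interlacing and an upper bound from our global estimates, and show both equal $2$. First observe that a $\theta$-graph and an $\infty$-graph each satisfy $\theta(G)=2$ and $p(G)=0$, so the hypothesis that $G$ is $1^+$-deficient says precisely $m_B(G,\lambda)=2\theta(G)+p(G)-1=3$. Hence it suffices to prove $m_B(G-y,\lambda)=2$, for then $m_B(G-y,\lambda)=m_B(G,\lambda)-1=2$. The lower bound is immediate: applying Lemma \ref{lem-2-11} to the vertex $y$ gives $m_B(G-y,\lambda)\ge m_B(G,\lambda)-1=2$.

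For the upper bound I would analyse the structure of $G-y$. Since $y$ has degree $2$ and lies on a cycle adjacent to a major vertex $x$, deleting $y$ destroys exactly one cycle, so $\theta(G-y)=\theta(G)-1=1$; the neighbour of $y$ distinct from $x$ drops to degree $1$ and becomes the unique new pendant vertex, while $x$ drops from degree $3$ to degree $2$ and the other major vertex retains degree $3$. Thus $G-y$ is a connected unicyclic graph with a single pendant path, giving $p(G-y)=1$, and because of that pendant vertex $G-y\ncong C_n$. Invoking the refined (non-cycle) case of Theorem \ref{thm3} then yields $m_B(G-y,\lambda)\le 2\theta(G-y)+p(G-y)-1=2\cdot 1+1-1=2$. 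Combining the two bounds forces $m_B(G-y,\lambda)=2$, which is the first assertion.

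For the \emph{moreover} part let $x$ be a major vertex of the $\theta$-graph $G$, recalling that $\mathcal{X}(G)$ consists of the two common endpoints, each of degree $3$. I would first exclude internal paths of length $1$: a direct edge between the two major vertices would make $G-x$ a path, whence $m_B(G,\lambda)\le m_B(G-x,\lambda)+1\le 1+1=2$ by Lemmas \ref{lem-2-11} and \ref{lem-2-2}, contradicting $m_B(G,\lambda)=3$. So all three internal paths have length at least $2$, and deleting $x$ joins the three surviving arms at the other major vertex into a spider tree with exactly three pendant vertices, i.e.\ $p(G-x)=3$. Theorem \ref{thm-2} then gives $m_B(G-x,\lambda)\le p(G-x)-1=2$, while Lemma \ref{lem-2-11} gives $m_B(G-x,\lambda)\ge m_B(G,\lambda)-1=2$; hence $m_B(G-x,\lambda)=2$ for every $x\in\mathcal{X}(G)$.

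The spectral sandwich itself is routine once the combinatorics are settled; the main obstacle is the structural bookkeeping. Specifically, I expect the delicate points to be verifying uniformly, across both the $\theta$- and $\infty$-graph cases and over all admissible positions of $y$ on a cycle, that $G-y$ remains connected and unicyclic and acquires exactly one pendant path (so that the refined bound evaluates to $2$), together with ruling out length-$1$ internal paths in the \emph{moreover} part so that the spider $G-x$ genuinely has three pendant vertices.
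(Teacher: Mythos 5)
Your proposal takes exactly the paper's route: the paper's entire proof is the two-line sandwich $m_B(G-y,\lambda)\le 2$ (from Theorem \ref{thm3} applied to $G-y$) and $m_B(G-y,\lambda)\ge m_B(G,\lambda)-1=2$ (from Lemma \ref{lem-2-11}), and you do the same, additionally writing out the \emph{moreover} part, which the paper leaves implicit; your treatment of it (ruling out a length-$1$ internal path via Lemmas \ref{lem-2-11} and \ref{lem-2-2}, then bounding the resulting $3$-leg spider by Theorem \ref{thm-2}) is correct and is a genuine improvement in completeness over the paper's write-up.

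There is, however, one concrete error in your upper-bound step. You claim that deleting $y$ always yields a connected unicyclic graph with exactly one pendant vertex, so that $G-y\ncong C_n$ and the refined bound $2\theta(G-y)+p(G-y)-1=2$ applies. This fails when $G$ is a $\theta$-graph and $y$ lies on an internal path of length exactly $2$: the neighbour of $y$ other than $x$ is then the second major vertex, whose degree drops from $3$ to $2$ (not to $1$), and $G-y$ is precisely a cycle with $p(G-y)=0$; your refined bound would read $2\cdot1+0-1=1$, which is false in general. The conclusion still survives, because in this sub-case Theorem \ref{thm3} gives the cycle bound $m_B(G-y,\lambda)\le 2\theta+p=2$ anyway --- which is presumably why the paper cites Theorem \ref{thm3} without any structural bookkeeping --- but as written your case analysis is incomplete and should be patched by this observation. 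Separately, your silent assumption that $y$ lies on a cycle is not merely convenient but necessary: for an $\infty$-graph whose connecting path has internal vertices, deleting a degree-$2$ path-vertex $y$ adjacent to a major vertex disconnects $G$ into two unicyclic pieces and the upper bound collapses (for the adjacency matrix of $\infty(4,4,3)$ and $\lambda=0$ one has $m(G,0)=3=2\theta+p-1$ yet $m(G-y,0)=4$), so the lemma must be read, as you and the paper's own proof implicitly do, with $y$ a cycle-vertex.
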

\begin{proof}
According to Theorem \ref{thm3}, we have $m_B(G-y, \lambda)\leq 2$.
On the other hand, by Lemma \ref{lem-2-11}, if $G$ is $1^+$-deficient, then
$m_B(G-y, \lambda)\ge m_B(G, \lambda)-1=2$.
\end{proof}

\begin{lem}\label{lem-4-5}
Let $G$ be a connected graph (not a cycle). If $G$ is $1^+$-deficient, then for any cycle-vertex $x$ of degree $2$ and adjacent to a major cycle-vertex of $G$, $$m_B(G, \lambda)=m_B(G-x, \lambda)+1.$$
Moreover, the graph $G-x$ also is $1^+$-deficient.
\end{lem}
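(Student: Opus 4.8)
The plan is to delete the vertex $x$, control the two parameters $\theta(G-x)$ and $p(G-x)$ exactly, and then squeeze $m_B(G-x,\lambda)$ between the interlacing bound of Lemma~\ref{lem-2-11} and the global bound of Theorem~\ref{thm3}. First I would record the structural effect of deleting $x$. Since $x$ is a cycle-vertex of degree $2$, its two neighbours are the prescribed major cycle-vertex $u$ (with $d(u)\ge 3$) and a second cycle-vertex $w$, both lying on a common cycle through $x$; deleting $x$ destroys exactly one independent cycle while keeping $G-x$ connected (the rest of that cycle still joins $u$ to $w$), so $\theta(G-x)=\theta(G)-1$. For the pendant count, only $u$ and $w$ lose a unit of degree: $u$ remains of degree $\ge 2$ and never becomes pendant, while $w$ becomes pendant exactly when $d_G(w)=2$. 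Hence $p(G-x)\in\{p(G),\,p(G)+1\}$, with $p(G-x)=p(G)+1$ iff $d_G(w)=2$.

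Next I would run the squeeze. Lemma~\ref{lem-2-11} gives $m_B(G,\lambda)\le m_B(G-x,\lambda)+1$, and Theorem~\ref{thm3} applied to $G-x$ gives $m_B(G-x,\lambda)\le 2\theta(G-x)+p(G-x)=2\theta(G)+p(G-x)-2$. Combining these with the hypothesis $m_B(G,\lambda)=2\theta(G)+p(G)-1$ yields $p(G-x)\ge p(G)$, so by the first paragraph only the two listed values survive. I claim the value $p(G-x)=p(G)$ is impossible. Indeed, if $d_G(w)\ge 3$ then $p(G-x)=p(G)$ and the whole chain collapses to equalities, forcing $m_B(G-x,\lambda)=2\theta(G-x)+p(G-x)$, the maximum of Theorem~\ref{thm3}; its equality clause then forces $G-x\cong C_n$, whence $G$ arises by joining $x$ to two vertices of a cycle, i.e. $G$ is a $\theta$-graph, contrary to the situation at hand (those graphs are routed through Lemma~\ref{lem-4-4}). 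Therefore $d_G(w)=2$ and $p(G-x)=p(G)+1$.

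With $p(G-x)=p(G)+1\ge 1$ secured, $G-x$ carries a pendant vertex and hence is not a cycle, so Theorem~\ref{thm3} is strict on $G-x$: $m_B(G-x,\lambda)\le 2\theta(G-x)+p(G-x)-1$. On the other hand Lemma~\ref{lem-2-11} gives the matching lower bound $m_B(G-x,\lambda)\ge m_B(G,\lambda)-1=2\theta(G)+p(G)-2=2\theta(G-x)+p(G-x)-1$. Thus $m_B(G-x,\lambda)=2\theta(G-x)+p(G-x)-1$, which is exactly the assertion that $G-x$ is $1^+$-deficient; substituting back gives $m_B(G,\lambda)=m_B(G-x,\lambda)+1$, so both conclusions follow simultaneously.

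The main obstacle is the claim $d_G(w)=2$ in the second paragraph. The rest is a mechanical pinch between Lemma~\ref{lem-2-11} and Theorem~\ref{thm3}; the genuine content is that the alternative $d_G(w)\ge 3$ is spectrally incompatible with $G$ being $1^+$-deficient unless $G-x$ degenerates to a cycle and $G$ to a $\theta$-graph. I would therefore be careful to ensure the ambient hypotheses really do exclude $\theta$-graphs, since for those the deletion can yield $G-x\cong C_n$ with $m_B(G-x,\lambda)=2$, in which case $G-x$ is \emph{not} $1^+$-deficient and the ``moreover'' clause would fail (this is precisely why $\theta$-graphs are handled separately in Lemma~\ref{lem-4-4}). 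By contrast, $\infty$-graphs cause no difficulty, as deleting a degree-$2$ cycle-vertex adjacent to a cut-vertex always leaves the other neighbour of degree $2$, placing us back in the clean case $p(G-x)=p(G)+1$.
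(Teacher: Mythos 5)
Your proof is correct and, at the decisive step, genuinely different from the paper's. Both arguments end with the same squeeze between Lemma \ref{lem-2-11} and Theorem \ref{thm3}, but they obtain the crucial fact $p(G-x)=p(G)+1$ in different ways. The paper gets it structurally: it first invokes Lemma \ref{lem-4-2} to conclude that every cycle of a $1^+$-deficient graph (other than a $\theta$- or $\infty$-graph) is a pendant cycle whose unique major vertex has degree $3$, so the second neighbour $w$ of $x$ automatically has degree $2$. You get it spectrally: if $d_G(w)\ge 3$, your chain of inequalities collapses to equality in Theorem \ref{thm3} for $G-x$, whose equality clause forces $G-x\cong C_n$ and hence $G$ to be a $\theta$-graph, which is excluded. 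Your route is more self-contained, needing only Lemma \ref{lem-2-11} and Theorem \ref{thm3} rather than the inductive structural Lemma \ref{lem-4-2}, and it also disposes of $\infty$-graphs inside the main argument instead of deferring them to Lemma \ref{lem-4-4}; the paper's route spends Lemma \ref{lem-4-2}, but that lemma is needed again in the proof of Theorem \ref{thm-5}, so the cost is shared.

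Your closing caution about $\theta$-graphs is warranted, and it is in fact sharper than the paper's own treatment. The paper's proof opens by saying that Lemma \ref{lem-4-4} reduces matters to the case where $G$ is neither a $\theta$-graph nor an $\infty$-graph, but Lemma \ref{lem-4-4} delivers only the equation $m_B(G-y,\lambda)=m_B(G,\lambda)-1$, not the ``moreover'' clause, and for $\theta$-graphs that clause can genuinely fail: take $G=K_{2,3}=\theta(2,2,2)$, $B=A$, $\lambda=0$. Then $m_A(G,0)=3=2\theta(G)+p(G)-1$, so $G$ is $1^+$-deficient, while deleting a degree-$2$ vertex $x$ gives $G-x\cong C_4$ with $m_A(C_4,0)=2$, whereas $2\theta(C_4)+p(C_4)-1=1$; so $G-x$ is not $1^+$-deficient. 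Thus both your argument and the paper's actually establish the full statement only for non-$\theta$-graphs. This does no harm downstream, since Theorem \ref{thm-5} applies the ``moreover'' clause only to graphs that are neither $\theta$- nor $\infty$-graphs, but it is a point where your explicit flag improves on the paper's exposition.
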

\begin{proof}
According to Lemma \ref{lem-4-4}, we only need to consider the case when $G$ is neither a $\theta$-graph nor an $\infty$-graph. We will prove this by contradiction.
Using Lemma \ref{lem-4-2}, if $G$ is $1^+$-deficient, then every cycle in $G$ is a pendant cycle. Consequently, we have $p(G-x)=p(G)+1$ and $\theta(G-x)=\theta(G)-1$. Moreover, since $G$ is not a $\theta$-graph, $G-x$ cannot be a cycle. If $m_B(G, \lambda)\leq m_B(G-x, \lambda)$, then Theorem \ref{thm3} implies that
\begin{align*}
m_B(G, \lambda)&\leq m_B(G-x, \lambda)\\
&\leq 2\theta(G-x)+p(G-x)-1\\
&=2(\theta(G)-1)+p(G)+1-1\\
&=2\theta(G)+p(G)-2,
\end{align*}
which is a contradiction.
Therefore, $m_B(G, \lambda)=m_B(G-x, \lambda)+1$, and so
\begin{align*}
m_B(G-x, \lambda)&=m_B(G, \lambda)-1\\
&= 2\theta(G)+p(G)-1-1\\
&=2(\theta(G-x)+1)+p(G-x)-1-1-1\\
&=2\theta(G-x)+p(G-x)-1.
\end{align*}
This completes the proof.
\end{proof}

\begin{thm}\label{thm-5}
Let $G$ be a connected graph with $\theta(G)\geq 1$ and $G\notin \mathcal{U}$.
If $\lambda$ is an eigenvalue of $B(G)\in \mathcal{S}(G)$,
then $m_B(G, \lambda)= 2\theta(G)+p(G)-1$ if and only if  $G$  has one of the following forms:
\begin{enumerate}[(a)]
\vspace{-0.2cm}
   \item An $\infty$-graph or a $\theta$-graph, and for any vertex $y$ of degree 2 and adjacent to a major vertex of $G$, $m_B(G-y, \lambda)=m_B(G, \lambda)-1=2$. Moreover, if $G$ is a $\theta$-graph, then $m_B(G-x, \lambda)=2$ for any vertex $x\in \mathcal{X}(G)$.
\vspace{-0.2cm}
   \item There is only one major vertex in each cycle of $G$, and $G-\mathcal{M}(G)\cong \cup_{j=1}^{\theta(G)} C^*_j \cup_{i=1}^l P^i$, where any pair of major vertices of $\mathcal{M}(G)$ are not adjacent,  $C^*_j\in\mathcal{U}$ and $P^i$ is a path. Moreover, $\lambda\in \sigma(B(P^i))$ for any $1\leq i\leq l$  and  $m_B(C^*_j)=2$ for any $1\leq j \leq \theta(G)$.
\vspace{-0.2cm}
\end{enumerate}
\end{thm}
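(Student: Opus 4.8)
The plan is to prove both implications by splitting on the dichotomy ``$G$ is a $\theta$- or $\infty$-graph'' versus ``$G$ is neither'', matching families (a) and (b) of the statement. For $G$ a $\theta$- or $\infty$-graph one has $p(G)=0$ and $\theta(G)=2$, so $2\theta(G)+p(G)-1=3$, and both directions collapse onto Lemma \ref{lem-4-4}. For sufficiency the condition displayed in (a) literally asserts $m_B(G,\lambda)=3$: once Lemma \ref{lem-2-11} forces $m_B(G,\lambda)=m_B(G-y,\lambda)+1=3$ from $m_B(G-y,\lambda)=2$, there is nothing further to check, and the supplementary clause $m_B(G-x,\lambda)=2$ for all $x\in\mathcal{X}(G)$ in the $\theta$-case is exactly the last line of Lemma \ref{lem-4-4}. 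For necessity, if such a $G$ is $1^+$-deficient then Lemma \ref{lem-4-4} produces precisely the requirements listed in (a).

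Next I would establish the sufficiency of (b) for $G$ neither a $\theta$- nor an $\infty$-graph. Writing $t=|\mathcal{M}(G)|$ and letting $l$ be the number of path components of $G-\mathcal{M}(G)$, I delete the $t$ pairwise non-adjacent vertices of $\mathcal{M}(G)$ one at a time and apply Lemma \ref{lem-2-11} to obtain
\[
m_B(G,\lambda)\ge m_B\big(G-\mathcal{M}(G),\lambda\big)-t .
\]
Since $G-\mathcal{M}(G)\cong\bigcup_{j=1}^{\theta(G)}C^*_j\cup_{i=1}^{l}P^i$ with $m_B(C^*_j,\lambda)=2$ and $\lambda\in\sigma(B(P^i))$ (so $m_B(P^i,\lambda)=1$ by Lemma \ref{lem-2-2}), the right-hand side equals $2\theta(G)+l-t$. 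A block–cut-tree (Euler-characteristic) count, identical in spirit to the identity $l=p+t-1$ used in the unicyclic case of Lemma \ref{thm-4}, yields $l-t=p(G)-1$, whence $m_B(G,\lambda)\ge 2\theta(G)+p(G)-1$. As $G$ is not a cycle, Theorem \ref{thm3} gives the matching upper bound $m_B(G,\lambda)\le 2\theta(G)+p(G)-1$, forcing equality.

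The necessity for $G$ neither $\theta$ nor $\infty$ is the substantive part, and I would argue by induction on $\theta(G)$. Lemma \ref{lem-4-2} immediately shows every block of order at least $3$ is a pendant cycle whose unique major vertex has degree $3$, which is exactly the clause ``one major vertex in each cycle''. The base case $\theta(G)=1$ is Lemma \ref{thm-4}, since a unicyclic graph outside $\mathcal{U}$ has at least two pendant paths. For the step with $\theta(G)=k\ge 2$, I pick a pendant cycle $C$ (provided by Lemma \ref{lem-4-2}), take a degree-$2$ cycle-vertex $x$ of $C$ adjacent to its degree-$3$ vertex $v_1$, and invoke Lemma \ref{lem-4-5}: then $m_B(G,\lambda)=m_B(G-x,\lambda)+1$, and $G-x$ is again $1^+$-deficient with $\theta(G-x)=k-1$ and $p(G-x)=p(G)+1$. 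Deleting $x$ opens $C$ into a pendant path, so $G-x$ acquires a pendant vertex and cannot be a $\theta$- or $\infty$-graph; and the only way $G-x\in\mathcal{U}$ (possible solely when $k=2$) would force $p(G)=0$ with two pendant cycles joined by a path, i.e.\ $G$ an $\infty$-graph, contradicting our case. Hence the induction hypothesis applies and $G-x$ has form (b).

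The final and most delicate step is to reconstruct form (b) for $G$ from that of $G-x$. Reinserting $x$ recloses $C$, and I must verify that the path component of $(G-x)-\mathcal{M}(G-x)$ that carried the opened cycle reorganizes into a new unicyclic component $C^*\in\mathcal{U}$ with $m_B(C^*,\lambda)=2$, while the other components, the eigenvalue memberships $\lambda\in\sigma(B(P^i))$, and the non-adjacency within $\mathcal{M}(G)$ are inherited unchanged; here Lemmas \ref{lem-2-1} and \ref{lem-2-2} transfer the spectral conditions across the cut exactly as in the proofs of Theorem \ref{thm-2} and Lemma \ref{thm-4}. I expect this reconstruction to be the main obstacle, together with the bookkeeping of the vertex $v_1$ (which drops to degree $2$ in $G-x$ and so is swallowed into a path there), because one must confirm that the full structural description of (b)---not merely the numerical equality $m_B(G,\lambda)=2\theta(G)+p(G)-1$---is preserved verbatim under the peel-and-restore operation.
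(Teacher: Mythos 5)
Your treatment of case (a), the sufficiency of (b), and the base case and general setup of the induction (via Lemmas \ref{lem-4-4}, \ref{lem-4-2}, \ref{lem-4-5}, \ref{thm-4} and Theorem \ref{thm3}) coincides with the paper's proof. The genuine gap is exactly the step you yourself flag as ``the main obstacle'': reconstructing form (b) for $G$ from form (b) for $G-x$ after deleting a vertex $x$ from a \emph{single} pendant cycle $C$. This step does not follow from the lemmas you cite. From form (b) of $G-x$ you learn only that the component of $(G-x)-\mathcal{M}(G-x)$ containing the opened cycle is a path with $\lambda$ as an eigenvalue; writing $C^*$ for the corresponding unicyclic component of $G-\mathcal{M}(G)$, this says $m_B(C^*-x,\lambda)\geq 1$, and interlacing (Lemma \ref{lem-2-11}) then yields only $m_B(C^*,\lambda)\leq m_B(C^*-x,\lambda)+1=2$, not the required equality $m_B(C^*,\lambda)=2$. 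Lemmas \ref{lem-2-1} and \ref{lem-2-2} cannot close this either: they transfer spectral data across a cut vertex joining paths, whereas here the issue is re-closing a cycle through the deleted vertex; a priori $m_B(C^*,\lambda)$ could be $0$, $1$ or $2$ even though $\lambda\in\sigma(B(C^*-x))$. A global count does not rescue it: deleting $\mathcal{M}(G)$ and bounding $m_B(G,\lambda)$ by the sum of component multiplicities plus $|\mathcal{M}(G)|$ leaves a slack of $2|\mathcal{M}(G)|$, too weak to force each cyclic component to attain multiplicity $2$.

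The paper circumvents this entirely with a two-cycle argument, which is the idea your proposal is missing. In the induction step $\theta(G)\geq 2$, so by Lemma \ref{lem-4-2} there are at least two pendant cycles $C^i$ and $C^j$; pick degree-$2$ vertices $x^i$ in $C^i$ and $x^j$ in $C^j$ adjacent to the respective major vertices. Both $G-x^i$ and $G-x^j$ are $1^+$-deficient by Lemma \ref{lem-4-5}, and both lie outside $\mathcal{U}$ and outside the $\theta$-/$\infty$-families (they have pendant vertices, and membership in $\mathcal{U}$ would force $G$ to be an $\infty$-graph, exactly as in your own checking), so the induction hypothesis gives form (b) for both. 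Since $\mathcal{M}(G-x^j)=\mathcal{M}(G)$ and $C^i$ is untouched in $G-x^j$, the clauses of (b) concerning $C^i$ (unique major vertex on that cycle; its component of $G-\mathcal{M}(G)$ lies in $\mathcal{U}$ and has multiplicity $2$) are read off directly from the form (b) of $G-x^j$, and symmetrically the clauses concerning $C^j$ come from $G-x^i$; the remaining clauses (paths $P^i$ with $\lambda\in\sigma(B(P^i))$, non-adjacency within $\mathcal{M}(G)$) are common to both descriptions. No cycle is ever re-closed, so the obstruction you identified never arises. Replacing your single-deletion step with this pairing argument makes the proof complete; as written, your route stalls precisely where you predicted.
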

\begin{proof}

Sufficiency: For graphs of form (a), it is clear that $m_B(G,\lambda)=3$.

For graphs of form $(b)$, suppose $|\mathcal{M}(G)|=t$. Since $G-\mathcal{M}(G)\cong \cup_{j=1}^{\theta(G)} C^*_j \cup_{i=1}^l P^i$, where any two major vertices of $\mathcal{M}(G)$ are not adjacent and $C^*_j\in\mathcal{U}$, we have $p(G)=l-t+1$.  According to Lemma \ref{lem-2-11}, we have
\begin{align*}
m_B(G, \lambda)&\geq m_B(G-\mathcal{M}(G), \lambda)-t\\
&=2\theta(G)+l-t\\
&=2\theta(G)+p(G)-1.
\end{align*}
This together with Theorem \ref{thm3} gives that $m_B(G,\lambda)=2\theta(G)+p(G)-1$.

Necessity:
We have (a) if $G$ is an $\infty$-graph or a $\theta$-graph by Lemma \ref{lem-4-4}.

Assume that $G$ is neither an $\infty$-graph or $\theta$-graph.
We proceed by induction on $\theta(G)$ to show that $G$ has the form of $(b)$.
When $\theta(G)=1$, it has been proved by Lemma \ref{thm-4}. Suppose the result holds for all connected graphs with $1\leq \theta(G)\leq k~(k\geq1)$ and let $G$ be a graph with $\theta(G)=k+1$.
Note that all cycles of $G$ are pendant cycles by Lemma \ref{lem-4-2}.
For two pendant cycles  $C^i$ and $C^j$, suppose $x^i$ and $x^j$ are cycle-verteices  adjacent to the major cycle-vertex of $C^i$ and $C^j$, respectively.
It is easy to notice $G-x^i, G-x^j\notin \mathcal{U}$. According to Lemma \ref{lem-4-5}, we have both $G-x^i$ and $G-x^j$ are $1^+$-deficient. So,
by the induction hypothesis, $G-x^i$ and $G-x^j$ have the form of $(b)$, implying that $G$ has the form of $(b)$.
By the principle of induction, the proof is complete.
\end{proof}

To characterize all $1^+$-deficient connected graphs, we conclude the following result including Theorems \ref{thm-2} and \ref{thm-5}.

\begin{thm}\label{thm-conclusion-result}
Let $G$ be a connected graph. If $\lambda$ is an eigenvalue of $B(G)\in \mathcal{S}(G)$,
then $m_B(G, \lambda)= 2\theta(G)+p(G)-1$ if and only if  $G$  has one of the following forms:
\begin{enumerate}[(a)]
\vspace{-0.2cm}
\item A path, or a tree $T$ with $p(T)\ge3$ and each path $P_i$ of $T-\mathcal{X}(T)$ with $\lambda\in \sigma(A(P_i))$ and any two major vertices of $\mathcal{X}(T)$ are not adjacent.
\vspace{-0.2cm}
\item A cycle with $m_B(G,\lambda)=1$, or a graph in $\mathcal{U}$ except a cycle with $m_B(G,\lambda)=2$.
\vspace{-0.2cm}
   \item An $\infty$-graph or a $\theta$-graph, and for any vertex $y$ of degree 2 and adjacent to a major vertex of $G$, $m_B(G-y, \lambda)=m_B(G, \lambda)-1=2$. Moreover, if $G$ is a $\theta$-graph, then $m_B(G-x, \lambda)=2$ for any vertex $x\in \mathcal{X}(G)$.
\vspace{-0.2cm}
   \item A graph with $\mathcal{M}\not=\varnothing$ and $\theta(G)\ge 1$ such that there is only one major vertex in each cycle of $G$, and $G-\mathcal{M}(G)\cong \cup_{j=1}^{\theta(G)} C^*_j \cup_{i=1}^l P^i$, where any pair of major vertices of $\mathcal{M}(G)$ are not adjacent, $C^*_j\in\mathcal{U}$ and $P^i$ is a path.
   Moreover, $\lambda\in \sigma(B(P^i))$ for any $1\leq i\leq l$  and  $m_B(C^*_j)=2$ for any $1\leq j \leq \theta(G)$.
\end{enumerate}
\end{thm}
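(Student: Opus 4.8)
The plan is to deduce the equivalence by assembling the three extremal characterizations already established, organized by a case split on the cyclomatic number and on membership in $\mathcal{U}$. First I would observe that every connected graph $G$ lies in exactly one of three classes: (I) $\theta(G)=0$, so that $G$ is a tree; (II) $\theta(G)=1$ with $p(G)\le 1$, so that $G\in\mathcal{U}$; and (III) $\theta(G)\ge 1$ with $G\notin\mathcal{U}$. In each class the target value $2\theta(G)+p(G)-1$ specializes, and the task is merely to match the resulting equality condition against one of the forms (a)--(d).

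In class (I) one has $2\theta(G)+p(G)-1=p(G)-1$, so the equality reads $m_B(T,\lambda)=p(T)-1$. Theorem~\ref{thm-2} characterizes exactly this: $T$ is a path (when $p(T)=2$) or, for $p(T)\ge 3$, every path of $T-\mathcal{X}(T)$ carries $\lambda$ in its spectrum and no two major vertices of $\mathcal{X}(T)$ are adjacent, which is precisely form (a). In class (II), $2\theta(G)+p(G)-1=p(G)+1\in\{1,2\}$; since $G\in\mathcal{U}$ already pins $G$ down to a cycle ($p=0$) or a cycle with a single pendant path ($p=1$), the equality collapses to the bare multiplicity statement $m_B(G,\lambda)=1$ or $m_B(G,\lambda)=2$, which is form (b). I would also record here that $\mathcal{M}(G)=\varnothing$ for every $G\in\mathcal{U}$, a fact needed for disjointness below.

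In class (III) I would invoke Theorem~\ref{thm-5} verbatim: the equality holds if and only if $G$ is an $\infty$-graph or a $\theta$-graph satisfying the stated condition at a degree-$2$ neighbor of a major vertex (form (c)), or $G$ has a single major vertex in each cycle with the decomposition $G-\mathcal{M}(G)\cong\bigcup_{j}C^*_j\cup_i P^i$ (form (d)). The one point requiring verification is that the hypothesis $\mathcal{M}(G)\ne\varnothing$ appended in form (d) is automatic in this class: if $\mathcal{M}(G)=\varnothing$ then $G-\mathcal{M}(G)=G$ would be a disjoint union of cycles $C^*_j$ and paths, forcing the connected graph $G$ to be a single member $C^*$ of $\mathcal{U}$ (as $\theta(G)\ge1$) and contradicting $G\notin\mathcal{U}$. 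Conversely $\mathcal{M}(G)\ne\varnothing$ rules out the cycles, the $\mathcal{U}$-graphs, and the $\infty$/$\theta$-graphs of forms (b) and (c), all of which have $\mathcal{M}=\varnothing$.

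Finally I would close by checking that the four forms are pairwise disjoint and jointly exhaustive over connected graphs: (a) captures the trees of class (I), (b) the members of $\mathcal{U}$ (distinguished by $\theta=1$, $p\le1$, $\mathcal{M}=\varnothing$) of class (II), and (c), (d) the two sub-cases of class (III) separated by $\mathcal{M}=\varnothing$ versus $\mathcal{M}\ne\varnothing$. Since the three classes partition all connected graphs, the global equivalence is exactly the union of the three case characterizations. I expect the only genuine obstacle to be this bookkeeping---confirming that the partition is clean and that the $\mathcal{M}=\varnothing$/$\mathcal{M}\ne\varnothing$ boundary between forms (b)/(c) and (d) is drawn correctly---while the substantive spectral content has already been carried by Theorems~\ref{thm-2} and~\ref{thm-5}, with Lemmas~\ref{thm-4} and~\ref{lem-4-2} underlying the latter.
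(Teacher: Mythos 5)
Your proposal is correct and follows essentially the same route as the paper: the paper states Theorem~\ref{thm-conclusion-result} as a summary assembled from Theorem~\ref{thm-2} (the tree case, form (a)), the trivial specialization for $G\in\mathcal{U}$ (form (b), where the condition is just the multiplicity statement itself), and Theorem~\ref{thm-5} (forms (c) and (d)). Your additional bookkeeping---verifying that the three classes partition the connected graphs and that $\mathcal{M}(G)\neq\varnothing$ is automatic in form (d) for $G\notin\mathcal{U}$---is exactly the (implicit) content the paper leaves to the reader, and you carry it out correctly.
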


Evidently, we may not provide a more detailed description for $(b)$ of Theorem \ref{thm-conclusion-result} in view of a constructive counterexample,
since $m_{A}(C_4,0)=2$ but $m_{B}(C_4,\lambda)=1$ for any eigenvalue of $B(C_4)\in \mathcal{S}(C_4)$, where $B$ is the matrix $A$ with $a_{12}$ and $a_{21}$ replaced by $2$.
This observation highlights that the multiplicity of an eigenvalue of $B(G)$ for any graph in $\mathcal{U}$ is significantly influenced by the matrix $B(G)$.
In spite of this observation, we can investigate the constitutive properties of $G$ when $B$ is considered as the adjacency matrix.

Let $A(G)$ be the adjacency matrix of $G$. Next, we will characterize all graphs in $\mathcal{U}$ with $m_A(C^*, \lambda)=2$.
We may notice that $m_A(C_n,\lambda)=2$ if and only if $\lambda=2\cos\frac{2j\pi}{n}$ for $j=1,\ldots,\lfloor\frac{n}{2}\rfloor$ (see \cite{Cv2} for example).

\begin{lem}\label{lem-4-1}
Let $C^*=C_mwP^*$ be a unicyclic graph obtained from an isolated vertex $w$ and the union of a cycle $C_m$ and a path $P^*$  (possibly, $P^*=\varnothing$), by connecting $w$ and a vertex of $C_m$, 
and connecting $w$ and a pendent vertex of $P^*$ (see $Fig.\,\ref{fig-3}$).
Then $m_A(C^*, \lambda)= 2$ if and only if $\lambda\in \sigma(A(P^*))$  and  $m_A(C_m, \lambda)=2$.
\end{lem}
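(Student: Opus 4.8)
The plan is to view $C^*$ as the cycle $C_m$ with the pendant path $R:=wP^*$ (the path on $w,p_1,\dots,p_s$, where $P^*=p_1\cdots p_s$ and $R-w=P^*$) attached at the cycle-vertex $u$ adjacent to $w$. Because $C^*$ carries a pendant vertex it is not a cycle, so Theorem~\ref{thm3} already yields $m_A(C^*,\lambda)\le 2\theta(C^*)+p(C^*)-1=2$. Thus in both directions it only remains to decide when this bound is attained: I would obtain sufficiency by constructing two independent eigenvectors, and necessity by peeling $R$ off with Lemmas~\ref{lem-removePath} and~\ref{lem-2-1}, reducing the question to the multiplicity of $\lambda$ on $C_m$.

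For sufficiency, assume $m_A(C_m,\lambda)=2$ and $\lambda\in\sigma(A(P^*))$. The $\lambda$-eigenspace $E$ of $C_m$ is $2$-dimensional, and since $C_m$ is vertex-transitive, $E$ cannot consist solely of vectors vanishing at $u$; hence $Z\mapsto Z(u)$ is a nonzero (so surjective) linear functional on $E$ with $1$-dimensional kernel. First I pick a nonzero $Z\in E$ with $Z(u)=0$ and extend it by $0$ on $R$; the equation at $w$ requires precisely $Z(u)=0$, so this gives an eigenvector $Y^{(1)}$ of $C^*$. Next, let $V$ be an eigenvector of $A(P^*)$ for $\lambda$, choose $W\in E$ with $W(u)=-V(p_1)$, and set $Y^{(2)}:=V$ on $P^*$, $Y^{(2)}(w)=0$, and $Y^{(2)}:=W$ on $C_m$; checking the equations at $p_1$, $w$ and $u$ (which force $Y^{(2)}(w)=0$ and $W(u)=-V(p_1)$) confirms $Y^{(2)}$ is an eigenvector. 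As $Y^{(1)}$ vanishes on $P^*$ while $Y^{(2)}|_{P^*}=V\neq 0$, they are linearly independent, so $m_A(C^*,\lambda)\ge 2$, forcing equality.

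For necessity, assume $m_A(C^*,\lambda)=2$. Applying Lemma~\ref{lem-removePath} to the pendant path $R$ gives $m_A(C_m,\lambda)\ge m_A(C^*,\lambda)-1=1$, so $m_A(C_m,\lambda)\in\{1,2\}$. If $m_A(C_m,\lambda)=1$, then $\lambda$ is a simple cycle eigenvalue, hence $\lambda\in\{2,-2\}$, and both values lie outside the interval $(-2,2)$, which contains $\sigma(A(C_m-u))=\sigma(A(P_{m-1}))$; by Lemma~\ref{lem-2-1} (with $G=C_m$, $H=R$) this gives $m_A(C^*,\lambda)=m_A(R-w,\lambda)=m_A(P^*,\lambda)\le 1$, a contradiction. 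Hence $m_A(C_m,\lambda)=2$. To force $\lambda\in\sigma(A(P^*))$, suppose $\lambda\notin\sigma(A(P^*))$. By Lemma~\ref{lem-2-1} (now with $G=R$, $H=C_m$), were $\lambda\in\sigma(A(R))$ we would get $m_A(C^*,\lambda)=m_A(C_m-u,\lambda)=m_A(P_{m-1},\lambda)\le 1$, contradicting $m_A(C^*,\lambda)=2$; so necessarily $\lambda\notin\sigma(A(R))$.

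The main obstacle is exactly this last configuration, $\lambda\notin\sigma(A(R))$ and $\lambda\notin\sigma(A(P^*))$, where Lemma~\ref{lem-2-1} cannot be used: a multiplicity-$2$ eigenvalue of $C_m$ always satisfies $\lambda\in\sigma(A(C_m-u))$, so one cannot reduce through the cycle. I would resolve it by eliminating the pendant-path block via a Schur complement. Writing $\lambda I-A(C^*)$ in block form and eliminating the $R$-block (invertible since $\lambda\notin\sigma(A(R))$), one finds that $m_A(C^*,\lambda)$ equals the multiplicity of $\lambda$ for the real symmetric matrix $A(C_m)+\gamma\,e_u e_u^{\top}$, where $\gamma=\det(\lambda I-A(P^*))/\det(\lambda I-A(R))\neq 0$ because $\lambda\notin\sigma(A(P^*))$. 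This is a rank-one diagonal perturbation at $u$: if $x$ lay in its kernel with $x(u)\neq 0$, then $(\lambda I-A(C_m))x=\gamma\,x(u)\,e_u$ would put $e_u$ in $\mathrm{range}(\lambda I-A(C_m))=E^{\perp}$, impossible since $e_u\not\perp E$ (vertex-transitivity again). Hence the kernel is $\{Z\in E:\ Z(u)=0\}$, which is $1$-dimensional, giving $m_A(C^*,\lambda)=1$, the final contradiction; therefore $\lambda\in\sigma(A(P^*))$. Finally, the degenerate case $P^*=\varnothing$ is subsumed: then $\sigma(A(P^*))=\varnothing$, the right-hand condition is vacuously false, and the very same reductions show $m_A(C^*,\lambda)\le 1\neq 2$, so the equivalence holds trivially.
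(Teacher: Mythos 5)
Your proof is correct, but it takes a genuinely different route from the paper's. For sufficiency, the paper needs only one line: since $\lambda\in\sigma(A(P^*))$ forces $\lambda\notin\sigma(A(P^*-u))$ by Lemma~\ref{lem-2-2}, a single application of the transfer Lemma~\ref{lem-2-1} gives $m_A(C^*,\lambda)=m_A(C_m,\lambda)=2$; you instead build two explicitly independent eigenvectors of $C^*$, using the fact (via vertex-transitivity of $C_m$) that evaluation at $u$ is a surjective functional on the two-dimensional eigenspace $E$. For necessity, the paper picks an eigenvector $Y$ of $C^*$ vanishing at $w$ (possible since the multiplicity is $2$) and splits on whether $Y|_{P^*}$ vanishes, finishing each case with Lemmas~\ref{lem-2-1} and~\ref{lem-2-2}; you instead start from Lemma~\ref{lem-removePath} to force $\lambda\in\sigma(A(C_m))$, dispose of the simple cycle eigenvalues $\pm2$ and of the subcase $\lambda\in\sigma(A(R))$ with Lemma~\ref{lem-2-1}, and then handle the residual configuration $\lambda\notin\sigma(A(R))\cup\sigma(A(P^*))$ --- where Lemma~\ref{lem-2-1} is genuinely unusable --- by a Schur-complement reduction to the rank-one perturbation $A(C_m)+\gamma e_ue_u^{\top}$, whose kernel you correctly identify as $\{Z\in E:Z(u)=0\}$, of dimension $1$. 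Each approach has its merits: the paper's is shorter and stays entirely inside its standard toolkit (the vanishing-eigenvector trick is the same one it uses in Lemma~\ref{thm-4}); yours is more computational but sharper, since the Schur complement pins the multiplicity in the residual case at exactly $1$, and it isolates precisely which adjacency-specific facts are used (simple cycle eigenvalues are $\pm2$, $\sigma(A(P_{m-1}))\subset(-2,2)$, and $e_u\not\perp E$ by vertex-transitivity) --- which explains structurally why, as the Remark following the lemma shows, the statement fails for general $B\in\mathcal{S}(G)$. Your handling of the degenerate case $P^*=\varnothing$ is also sound and slightly more explicit than the paper's, where that case is absorbed silently into the same argument.
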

\begin{proof}
Necessity: Since $m_A(C^*, \lambda)=2$, we can construct an eigenvector $Y\neq \textbf{0}$ of $A(C^*)$ corresponding to $\lambda$ with the component $Y(w)=0$. Let $Y_1$ and $Y_2$ be the sub-vector of $Y$ corresponding to $C_m$ and $P^*$, respectively.
If $Y_2\not=\textbf{0}$, then $Y_2$ is a solution of $A(P^*)X=\lambda X$, which implies that $\lambda\in \sigma(A(P^*))$.
This gives $\lambda\notin\sigma(A(P^*-u))$ from Lemma \ref{lem-2-2}, where $u$ is the vertex of $P^*$ adjacent to $w$.
Then, by Lemma \ref{lem-2-1}, we have $2=m_B(C^*, \lambda)=m_A(C^*-P^*-w, \lambda)=m_A(C_m, \lambda)$.

If $Y_2=\textbf{0}$, then $Y_1\neq\textbf{0}$ for $Y\neq \textbf{0}$, which implies that $\lambda\in \sigma(A(C_m))$.
Moreover, from the eigen-equation $A(C^*)Y=\lambda Y$, we have $Y(v_1)=0$, which implies $\lambda\in \sigma(A(C_m-v_1))$. Let $v_2$ be the cycle-vertex adjacent to $v_1$.
Clearly, $\lambda\in \sigma(A(C_m-v_2))$, that is, $\lambda\in \sigma(A(P_{m-1}))$.
Notice that $C^*-v_2=P_{m-1}wP^*$.
If $\lambda\notin \sigma(A(P^*))$, by Lemmas \ref{lem-2-11}, \ref{lem-2-1} and \ref{lem-2-2}, we obtain $$m_A(C^*, \lambda)\leq m_A(C^*-v_2, \lambda)+1=m_A(P^*, \lambda)+1=1,$$
a contradiction.
Therefore, we have $\lambda\in\sigma(A(P^*))$. Then  $\lambda\notin \sigma(A(P^*-u))$ by Lemma \ref{lem-2-2}, which follows that $m_A(C_m,\lambda)=m_A(C^*,\lambda)=2$.

Sufficiency: If $\lambda\in \sigma(A(P^*))$  and  $m_A(C_m, \lambda)=2$, then by Lemma \ref{lem-2-1}, we have $m_A(C^*, \lambda)=m_A(C^*-P^*-w, \lambda)=m_A(C_m, \lambda)=2$.
\end{proof}

\begin{remark}
Lemma \ref{lem-4-1} not holds for all  matrix $B\in \mathcal{S}(G)$. Let $H_1$ and $H_2$ be the graphs shown as $Fig.\,\ref{fig-3}$, and
$\begin{matrix}
 B(H_1)=\left(\begin{array}{cccc}
 -10 & -10 & -10 & 8\\
-10 &-3 & -5&0 \\
  -10 & -5 & -3&0\\
  8&0&0&10\\
 \end{array}\right),
 \end{matrix}$
 $\begin{matrix}
 B(H_2)=\left(\begin{array}{cccccc}
 0 & 1 & 1 & 8 & 0 & 0\\
 1 & 0 & 9 & 0 & 0 & 0\\
 1 & 9 & 0 & 0 & 0 & 0\\
 8 & 0 & 0 & 0 & 4 & 0\\
 0 & 0 & 0 & 4 & 0 & 1\\
 0 & 0 & 0 & 0 & 1 & 0\\
 \end{array}\right).
 \end{matrix}$
By straightforward calculation, it can be shown that  $m_B(H_1, 2)=2$ and $m_B(H_2, -9)=2$. However,  we observe that
 $m_B(H_1[\{v_1,v_2,v_3\}],2)=1$, $m_B(H_2[\{v_1,v_2,v_3\}],-9)=1$, and $m_B(H_2[\{v_5,v_6\}],-9)=0$.
\end{remark}

\vskip0.3in
\begin{figure}[htbp]
\centering
\begin{tikzpicture}[x=1.00mm, y=1.00mm, inner xsep=0pt, inner ysep=0pt, outer xsep=0pt, outer ysep=0pt,scale=0.7]
\path[line width=0mm] (92.83,32.29) rectangle +(151.37,69.57);
\definecolor{L}{rgb}{0,0,0}
\path[line width=0.30mm, draw=L] (106.82,87.55) circle (11.36mm);
\definecolor{F}{rgb}{0,0,0}
\path[line width=0.30mm, draw=L, fill=F] (106.59,75.80) circle (1.00mm);
\path[line width=0.30mm, draw=L, fill=F] (106.82,67.10) circle (1.00mm);
\path[fill=F] (106.59,59.58) circle (1.00mm);
\path[fill=F] (106.35,44.30) circle (1.00mm);
\path[fill=F] (101.42,77.44) circle (1.00mm);
\path[] (106.59,76.03) -- (106.82,66.63);
\path[line width=0.30mm, draw=L] (106.82,76.27) -- (106.82,67.57);
\path[line width=0.30mm, draw=L, fill=F] (156.04,95.86) circle (1.00mm);
\path[line width=0.30mm, draw=L, fill=F] (176.01,95.86) circle (1.00mm);
\path[line width=0.30mm, draw=L, fill=F] (166.61,76.82) circle (1.00mm);
\path[line width=0.30mm, draw=L, fill=F] (166.61,56.85) circle (1.00mm);
\path[line width=0.30mm, draw=L] (155.80,95.63) -- (176.25,95.63);
\path[line width=0.30mm, draw=L] (156.04,95.39) -- (166.61,75.65);
\path[line width=0.30mm, draw=L] (176.25,95.39) -- (166.85,76.35);
\path[line width=0.30mm, draw=L] (166.85,76.82) -- (166.85,56.61);
\path[line width=0.30mm, draw=L, dash pattern=on 0.30mm off 0.50mm] (106.59,59.58) -- (106.59,44.07);
\path[line width=0.30mm, draw=L] (106.59,67.57) -- (106.59,58.64);
\draw(168.73,74.79) node[anchor=base west]{\fontsize{8.54}{10.24}\selectfont $v_1(-10)$};
\draw(142.77,94.22) node[anchor=base west]{\fontsize{8.54}{10.24}\selectfont $(-3)v_2$};
\draw(177.55,94.30) node[anchor=base west]{\fontsize{8.54}{10.24}\selectfont $v_3(-3)$};
\draw(168.73,55.91) node[anchor=base west]{\fontsize{8.54}{10.24}\selectfont $v_4(10)$};
\draw(153.22,83.87) node[anchor=base west]{\fontsize{8.54}{10.24}\selectfont $-10$};
\draw(172.49,83.64) node[anchor=base west]{\fontsize{8.54}{10.24}\selectfont $-10$};
\draw(163.55,97.51) node[anchor=base west]{\fontsize{8.54}{10.24}\selectfont $-5$};
\draw(108.94,65.46) node[anchor=base west]{\fontsize{8.54}{10.24}\selectfont $w$};
\draw(101.65,84.73) node[anchor=base west]{\fontsize{11.38}{13.66}\selectfont $C_m$};
\draw(108.76,73.45) node[anchor=base west]{\fontsize{8.54}{10.24}\selectfont $v_1$};
\draw(97.83,73.68) node[anchor=base west]{\fontsize{8.54}{10.24}\selectfont $v_2$};
\draw(108.70,51.35) node[anchor=base west]{\fontsize{8.54}{10.24}\selectfont $P^*$};
\draw(102.83,35.37) node[anchor=base west]{\fontsize{14.23}{17.07}\selectfont $C^*$};
\draw(163.44,35.37) node[anchor=base west]{\fontsize{14.23}{17.07}\selectfont $H_1$};
\draw(168.26,64.10) node[anchor=base west]{\fontsize{8.54}{10.24}\selectfont $8$};
\path[line width=0.30mm, draw=L, fill=F] (212.05,95.68) circle (1.00mm);
\path[line width=0.30mm, draw=L, fill=F] (229.11,95.53) circle (1.00mm);
\path[line width=0.30mm, draw=L, fill=F] (220.43,76.97) circle (1.00mm);
\path[line width=0.30mm, draw=L, fill=F] (220.58,66.49) circle (1.00mm);
\path[line width=0.30mm, draw=L, fill=F] (220.58,44.19) circle (1.00mm);
\path[line width=0.30mm, draw=L, fill=F] (220.73,55.12) circle (1.00mm);
\path[line width=0.30mm, draw=L] (211.90,95.83) -- (229.41,95.68);
\path[line width=0.30mm, draw=L] (211.90,95.98) -- (220.73,76.97);
\path[line width=0.30mm, draw=L] (229.11,95.98) -- (220.58,76.52);
\path[line width=0.30mm, draw=L] (220.43,77.12) -- (220.58,66.34);
\path[line width=0.30mm, draw=L] (220.58,66.94) -- (220.58,54.52);
\path[line width=0.30mm, draw=L] (220.73,55.27) -- (220.73,43.60);
\draw(222.52,74.43) node[anchor=base west]{\fontsize{8.54}{10.24}\selectfont $v_1(0)$};
\draw(201.62,94.33) node[anchor=base west]{\fontsize{8.54}{10.24}\selectfont $(0)v_2$};
\draw(231.20,94.33) node[anchor=base west]{\fontsize{8.54}{10.24}\selectfont $v_3(0)$};
\draw(219.44,97.03) node[anchor=base west]{\fontsize{8.54}{10.24}\selectfont $9$};
\draw(212.20,84.90) node[anchor=base west]{\fontsize{8.54}{10.24}\selectfont $1$};
\draw(225.82,84.75) node[anchor=base west]{\fontsize{8.54}{10.24}\selectfont $1$};
\draw(222.52,65.60) node[anchor=base west]{\fontsize{8.54}{10.24}\selectfont $v_4(0)$};
\draw(222.52,54.22) node[anchor=base west]{\fontsize{8.54}{10.24}\selectfont $v_5(0)$};
\draw(222.52,43.30) node[anchor=base west]{\fontsize{8.54}{10.24}\selectfont $v_6(0)$};
\draw(221.63,70.09) node[anchor=base west]{\fontsize{8.54}{10.24}\selectfont $8$};
\draw(221.63,59.31) node[anchor=base west]{\fontsize{8.54}{10.24}\selectfont $4$};
\draw(221.63,49.13) node[anchor=base west]{\fontsize{8.54}{10.24}\selectfont $1$};
\draw(109.23,58.56) node[anchor=base west]{\fontsize{8.54}{10.24}\selectfont $u$};
\draw(218.03,35.37) node[anchor=base west]{\fontsize{14.23}{17.07}\selectfont $H_2$};
\end{tikzpicture}%
\caption{The unicyclic graph $C^*$ and two exemplary graphs $H_1$ and $H_2$}\label{fig-3}
\end{figure}
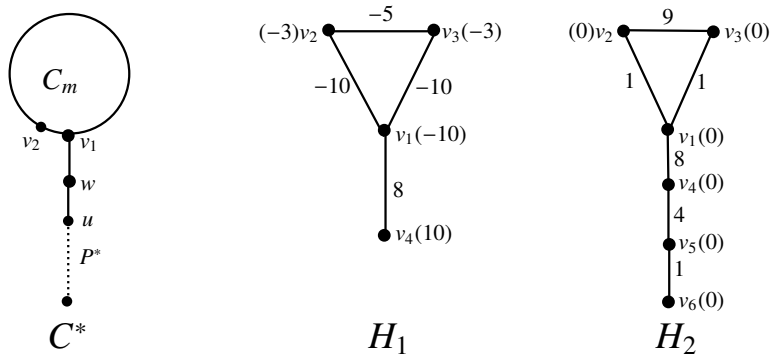
\vskip0.3in

According to Theorem \ref{thm-5} and Lemma \ref{lem-4-1}, we may deduce many known results obtained in \cite{Chang1}, \cite{Wang4} (\cite{Chang}) and \cite{Zhou}.
\begin{cor}[\cite{Zhou}]
Let $G$ be a connected graph (not a cycle) with $\theta(G) \geq 1$.
Then $m_A(G,-1)=$ $2 \theta(G)+p(G)-1$ if and only if it is obtained from a tree $T$ with $m_A(T,-1)=p(T)-1$ by attaching $\theta(G)$ cycles of order a multiple of 3 to $\theta(G)$ quasi-pendent vertices of $T$ and delete related $\theta(G)$ pendent vertices,
called construction condition, where $p(T) \geq \theta(G)$.
\end{cor}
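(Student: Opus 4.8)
The plan is to obtain this corollary by specializing the general classification in Theorem~\ref{thm-5} to the adjacency matrix $B=A$ and the eigenvalue $\lambda=-1$, and then translating every spectral side-condition appearing there into an arithmetic (mod~$3$) condition, so that forms (a)--(b) of Theorem~\ref{thm-5} become exactly the tree-plus-cycles construction of Zhou et al. First I would set up a dictionary converting spectra into congruences. From the stated fact $m_A(C_n,\lambda)=2\iff\lambda=2\cos\frac{2j\pi}{n}$ one gets $m_A(C_m,-1)=2\iff 3\mid m$; from $\sigma(A(P_k))=\{2\cos\frac{j\pi}{k+1}\}$ one gets $-1\in\sigma(A(P_k))\iff k\equiv 2\pmod 3$; and Corollary~\ref{cor-2} records that $m_A(T,-1)=p(T)-1$ holds iff $d_T(v,u)\equiv 2\pmod 3$ for every pendant $v$ and major $u$ (or $T$ is a path of order $\equiv 2\pmod 3$). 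These three entries let me rewrite the hypotheses ``$\lambda\in\sigma(B(P^i))$'' and ``$m_B(C^*_j,\lambda)=2$'' of Theorem~\ref{thm-5}(b) purely in terms of lengths modulo~$3$.

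For form (b) I would apply Lemma~\ref{lem-4-1} to each unicyclic block $C^*_j$: since $C^*_j\in\mathcal{U}$ is a cycle $C_m$ carrying at most one pendant path, $m_A(C^*_j,-1)=2$ is equivalent to $3\mid m$ together with $-1\in\sigma(A(P^*))$ for its tail, i.e. the tail $P^*$ has order $\equiv 2\pmod 3$. I would then reconstruct a tree $T$ from $G$ by deleting, from each pendant cycle, all cycle-vertices except its unique major vertex $q_j$, and re-attaching a single pendant vertex at $q_j$; the vertices of $\mathcal{M}(G)$ become the major vertices of $T$, while the paths $P^i$ and the cycle tails become the paths of $T-\mathcal{X}(T)$. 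The core verification is that the mod-$3$ constraints survive this operation: because $3\mid m$ makes $C_m$ ``transparent'' for a $(-1)$-eigenvector, the tail condition $|P^*|\equiv 2\pmod 3$ from Lemma~\ref{lem-4-1}, together with $-1\in\sigma(A(P^i))$, is precisely equivalent to $d_T(v,u)\equiv 2\pmod 3$ for the reconstructed tree, which by Corollary~\ref{cor-2} is $m_A(T,-1)=p(T)-1$. Carrying out this computation in both directions shows that $G$ satisfies form (b) for $(A,-1)$ iff $G$ arises from such a $T$ by attaching $\theta(G)$ cycles of order divisible by $3$ to $\theta(G)$ quasi-pendent vertices and deleting the corresponding pendants; the inequality $p(T)\ge\theta(G)$ is just the requirement that $T$ have enough pendants to attach the cycles.

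For form (a) I would argue that, specialized to $(A,-1)$, the $\theta$-graph alternative does not occur, whereas the admissible $\infty$-graphs are exactly those produced by the construction. Indeed, the form-(a) criterion $m_A(G-x,-1)=2$ for both degree-$3$ vertices $x$ of a $\theta$-graph forces all three internal path lengths to be $\equiv 0\pmod 3$ (via Corollary~\ref{cor-2} applied to the resulting spider), and a direct eigenvector computation then shows the symmetric part of the $(-1)$-eigenspace collapses, so $m_A\le 2$ and no $\theta$-graph is $1^+$-deficient for $-1$. By contrast, an $\infty$-graph with connecting path is recovered by attaching two cycles of order divisible by $3$ to the two quasi-pendent vertices of a path $P_n$ with $n\equiv 2\pmod 3$ and deleting its two leaves, which is the construction with $T=P_n$. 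Finally I would dispose of the boundary case $G\in\mathcal{U}$ (a cycle with one pendant path), which Theorem~\ref{thm-5} excludes, directly from Lemma~\ref{lem-4-1}: here $2\theta(G)+p(G)-1=2$, and $m_A(G,-1)=2$ forces $3\mid m$ with tail of order $\equiv 2\pmod 3$, again matching $T=P_n$ with a single attached cycle.

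The main obstacle is the bookkeeping of the second paragraph: translating the \emph{structural} description $G-\mathcal{M}(G)\cong\bigcup_j C^*_j\cup\bigcup_i P^i$ of Theorem~\ref{thm-5} into the \emph{generative} description of Zhou et al. requires verifying that each mod-$3$ length constraint is exactly preserved when every pendant cycle of order divisible by $3$ is replaced by a pendant edge (and conversely), and that no relevant information is lost in passing between the tree distances $d_T(v,u)$ and the tail/path lengths occurring in $G$. Once this correspondence is established, the remaining steps are routine specializations of the dictionary and of Lemma~\ref{lem-4-1}.
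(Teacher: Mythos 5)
Your proposal is correct and, for the main cases, follows the same route as the paper: both translate the spectral side-conditions of Theorem~\ref{thm-conclusion-result} into congruences via $m_A(P_n,-1)=1$ iff $n\equiv 2 \pmod 3$ and $m_A(C_n,-1)=2$ iff $3\mid n$, and then match condition (d) with Zhou's construction by combining Corollary~\ref{cor-2} with Lemma~\ref{lem-4-1}, treating the $\mathcal{U}$-case and the $\infty$-graph case in the same way (the paper is just as terse as you are about the tree-reconstruction bookkeeping). The one place you genuinely diverge is the exclusion of $\theta$-graphs. The paper stays entirely inside its lemmas: applying condition (c) at a major vertex $x$ forces $l\equiv 0\pmod 3$ (Theorem~\ref{thm-2} applied to the spider $G-x$), while applying it at the degree-$2$ vertex $w$ adjacent to $x$ on $P_{l+1}$ forces, via Lemma~\ref{lem-4-1} on the unicyclic graph $G-w$, that $-1\in\sigma(A(P_{l-3}))$, i.e. $l\equiv 2\pmod 3$ --- an immediate arithmetic contradiction. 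You instead derive $p\equiv q\equiv l\equiv 0\pmod 3$ from the major-vertex criterion alone and then assert that a ``direct eigenvector computation'' yields $m_A(G,-1)\le 2$, contradicting $1^+$-deficiency (which for a $\theta$-graph demands multiplicity $3$). That assertion is true and your route is sound, but it is the crux of your version and must be written out: a $(-1)$-eigenvector is $3$-periodic along each internal path, so when $3\mid p,q,l$ the values at the two junction vertices coincide, say both equal $a$; the eigen-equation at one junction gives $t_1+t_2+t_3=-a$ for the values $t_i$ at its neighbors, and the equation at the other junction then gives $-3a-(t_1+t_2+t_3)=-a$, forcing $a=0$, so the eigenspace is the $2$-dimensional space $\{t_1+t_2+t_3=0\}$. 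The trade-off: the paper's argument is shorter and purely arithmetic, reusing Lemma~\ref{lem-4-1} a second time; yours needs an explicit local computation but uses only one of the two parts of condition (c).
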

\begin{proof}
Note that $m_A(P_n, -1)=1$ if and only if $n=3k-1~(k\geq1)$ and $m_A(C_n, -1)=2$ if and only if $n=3k~(k\geq1)$. By utilizing Corollary \ref{cor-2} and Lemma \ref{lem-4-1}, we see that the condition $(d)$ of Theorem \ref{thm-conclusion-result} is equivalent to the construction condition when $\mathcal{M}\not=\varnothing$.
And when $G$ is a graph in $\mathcal{U}$, the result follows from Theorem \ref{thm-conclusion-result} $(b)$ and Lemma \ref{lem-4-1}.
In the case that $G$ is a $\theta$-graph, let $x, y$ be the only two vertices in $\mathcal{X}(G)$, and let $w$ be a vertex adjacent to $x$ on the path $P_{l+1}$.
According to Theorem \ref{thm-conclusion-result} $(c)$, if $x\in \mathcal{X}(G)$, then $m_A(G-x, -1)=2$.
This implies that $G-x$ is not a path and any path of $G-x-y$ has $-1$ as an eigenvalue by Theorem \ref{thm-2}, implying $l\equiv0\,(\bmod \,3)$.
On the other hand, since $m_A(G-w, 2)=2$ by Theorem \ref{thm-conclusion-result} $(c)$, we have $-1\in \sigma(A(P_{l-3}))$ by Lemma \ref{lem-4-1}, implying $l\equiv2\,(\bmod\,3)$, a contradiction.
As a result, we conclude that $G$ cannot be a $\theta$-graph.
If $G$ is an $\infty$-graph, then $p,q,l\equiv 0\,(\bmod\,3)$ according to Theorem \ref{thm-conclusion-result} $(c)$ and Lemma \ref{lem-4-1}, which coincides with the construction conditions.
\end{proof}

\begin{cor}[\cite{Chang1,Chang,Wang4}]
 Let $G$ be a connected graph with $\theta(G) \geq 1$. Then $\eta(G)=2\theta(G)+p(G)-1$ if and only if $G$ has one of the following forms.
 \begin{enumerate}[(a)]
\vspace{-0.2cm}
   \item A graph obtained from a tree $T$ with nullity $p(T)-1$ by attaching $c(G)$ cycles of orders multiple of $4$ at $c(G)$ leaves of $T$, where $p(T) \geq c(G)$;
\vspace{-0.2cm}
   \item An $\infty$-graph $\infty(p, q, l)$, where $p \equiv q \equiv 0\,(\bmod\, 4)$ and $l \equiv 1\,(\bmod\, 2)$;
\vspace{-0.2cm}
   \item A $\theta$-graph $\theta(p, q, l)$, where $p \equiv q \equiv l \equiv 0\,(\bmod\, 4)$ or $p \equiv q \equiv l \equiv 2\,(\bmod\, 4)$.
\end{enumerate}
\end{cor}
\begin{proof}
Note that $\eta(P_n)=1$ if and only if $n$ is odd, and $\eta(C_n)=2$ if and only if $n$ is a multiple of $4$.
By Corollary \ref{cor-1} and Lemma \ref{lem-4-1}, condition $(d)$ of Theorem \ref{thm-conclusion-result} is equivalent to $(a)$.
If $G\cong \infty(p,q,l)$, by Theorem \ref{thm-conclusion-result} $(c)$, since for any vertex $y$ adjacent to the major vertex of $G$, $m_A(G-y, \lambda)=2$, and $m_A(G-x, -1)=2$ for $x\in \mathcal{X}(G)$, we have $p\equiv q\equiv l\equiv 0\,(\bmod\, 2)$, and $p+q\equiv p+l\equiv q+l\equiv 0\,(\bmod\, 4)$ by Lemma \ref{lem-4-1}.
Hence $p\equiv q\equiv l\equiv 0\,(\bmod\, 4)$ or $p\equiv q\equiv l\equiv 2\,(\bmod\, 4)$.
If $G\cong \theta(p, q, l)$, according to Theorem \ref{thm-conclusion-result} $(c)$ and Lemma \ref{lem-4-1}, then $p\equiv q\equiv 0\,(\bmod\, 4)$ and $l\equiv1\,(\bmod\, 2)$.
\end{proof}

\vskip 0.3in
\noindent{\bf\large Declaration of competing interest}
\vskip0.1in
No conflict of interest to this work.

\end{document}